\newtheorem{theorem}{Theorem}
\newtheorem{definition}[theorem]{Definition}
\newtheorem{remark}[theorem]{Remark}
\newtheorem{proposition}[theorem]{Proposition}
\newtheorem{lemma}[theorem]{Lemma}
\newtheorem{corollary}[theorem]{Corollary}
\numberwithin{equation}{theorem}
\newcommand{\C}{\mathbb{C}}
\newcommand{\D}{\Omega}
\newcommand{\ep}{\varepsilon}
\newcommand{\Dc}{\overline{\Omega}}
\newcommand{\dbar}{\overline{\partial}}
\newcommand{\zb}{\overline{z}}
\newcommand{\wb}{\overline{w}}
\title{A local weighted Axler-Zheng theorem in $\mathbb{C}^n$}
\author{\v{Z}eljko \v{C}u\v{c}kovi\'c}
\address[\v{Z}eljko \v{C}u\v{c}kovi\'c]{University of Toledo, Department of 
	Mathematics \& Statistics, Toledo, OH 43606, USA}
 \email{Zeljko.Cuckovic@utoledo.edu}
\author{S\"{o}nmez \c{S}ahuto\u{g}lu}
\address[S\"{o}nmez \c{S}ahuto\u{g}lu]{University of Toledo, Department of 	
Mathematics \& Statistics, Toledo, OH 43606, USA}
\curraddr{Sabanc\i{} University, Tuzla, Istanbul, 34956, Turkey}
\email{Sonmez.Sahutoglu@utoledo.edu}
\author{Yunus E. Zeytuncu}
\address[Yunus E. Zeytuncu]{University of Michigan--Dearborn, Department of 
	Mathematics \& Statistics, Dearborn, MI 48128, USA}
\email{zeytuncu@umich.edu}
\subjclass[2010]{Primary 47B35; Secondary 32W05}
\keywords{Axler-Zheng theorem, Toeplitz operators, pseudoconvex domains}
\date{\today}
\begin{document}

\begin{abstract}
The well-known Axler-Zheng theorem characterizes compactness of finite sums of finite products of Toeplitz operators on the unit disk in terms of the Berezin transform of these operators. Subsequently this theorem was generalized to other domains and appeared in different forms, including domains in $\mathbb{C}^n$ on which the $\dbar$-Neumann operator $N$ is compact. In this work we remove the assumption on $N$, and we study weighted Bergman spaces on smooth bounded pseudoconvex domains. We prove a local version of the Axler-Zheng theorem characterizing compactness of Toeplitz operators in the algebra generated by symbols continuous up to the boundary in terms of the behavior of the Berezin transform at strongly pseudoconvex points. We employ a Forelli-Rudin type inflation method to handle the weights.

\end{abstract}
\maketitle

\section{Introduction}
\subsection{History}
In the theory of Bergman space operators on the open unit disk $\mathbb{D}$, 
Axler-Zheng theorem \cite{AxlerZheng98} provides an important characterization 
of compactness of a large class of operators in terms of their Berezin 
transforms.  Specifically this theorem states that if $S$ is a finite sum of finite 
products of Toeplitz operators on the Bergman space $A^2(\mathbb{D})$ whose 
symbols are in $L^{\infty}(\mathbb{D})$, then $S$ is compact if and only if the 
Berezin transform of $S$, $BS(z) \rightarrow 0$ as $|z|\rightarrow 1$.  This 
theorem has been extended by Suarez \cite{Suarez07} to include all operators in 
the Toeplitz algebra in the unit ball in $\mathbb{C}^n$. Englis \cite{Englis99} 
extended the Axler-Zheng theorem to irreducible bounded symmetric domains and the 
unit polydisk.  Mitkovski, Suarez and Wick \cite{MitkovskiSuarezWick13} proved 
a weighted version of Suarez's result on the unit ball in 
$\mathbb{C}^n.$ Using the techniques of several complex variables, 
\u{C}u\u{c}kovi\'c and \c{S}ahuto\u{g}lu \cite{CuckovicSahutoglu13} 
proved a version of the Axler-Zheng theorem on smooth bounded pseudoconvex 
domains on which the $\overline\partial$-Neumann operator is compact.  The use 
of the $\overline\partial$ techniques required that the operators in their 
theorem belong to the algebra $\mathscr{T}(\overline\Omega)$ which is the norm 
closed algebra generated by $\{T_{\phi}:\phi\in C(\Dc)\}.$ Recently, in her 
Master's thesis \cite{KreutzerThesis}, Kreutzer generalized  
\u{C}u\u{c}kovi\'c and  \c{S}ahuto\u{g}lu's result in a more abstract setting.

In this paper our aim is to extend the previous result of \u{C}u\u{c}kovi\'c 
and \c{S}ahuto\u{g}lu in two ways: Firstly, we want to remove the hypothesis of 
the compactness of the $\overline\partial$-Neumann operator on $\Omega$.  We 
also want to consider weighted Bergman spaces.  Our main theorem gives a local 
version of the Axler-Zheng theorem  for a wide class of domains in 
$\mathbb{C}^n.$  The novelty of our approach is to use the inflation of the 
domain argument pioneered by Forelli-Rudin and Ligocka 
\cite{ForelliRudin74/75,Ligocka89}.  The second important ingredient is the 
B-regularity of the inflated domain which will give us the compactness of 
$\overline\partial$, thus replacing the assumption on the compactness of the 
$\overline\partial$-Neumann operator.  As a corollary we obtain a weighted 
version of the Axler-Zheng theorem for strongly pseudoconvex domains, which 
itself is a new result.

\subsection{Preliminaries}
Let $\D$ be a $C^1$-smooth bounded pseudoconvex domain in $\C^n$ with a defining 
function $\rho$. We denote the boundary of $\D$ by  $b\D$. Let  
$L^2(\D,(-\rho)^r)$ denote the square integrable functions on $\D$ with respect 
to the measure $(-\rho)^rdV$ where $dV$ denotes the Lebesgue measure, 
$r\geq 0$, and 
\[A^2\left(\D,(-\rho)^r\right)=\left\{f\in L^2(\D,(-\rho)^r): f  \text{ is holomorphic}\right\}.\]
Since  $A^2\left(\D,(-\rho)^r\right)$ is a closed subspace of $L^2(\D,(-\rho)^r)$ a bounded 
orthogonal projection \[P_r:L^2(\D,(-\rho)^r)\to A^2(\D,(-\rho)^r),\] 
(called Bergman projection) exists. $P_r$ is an integral operator of the form 
\[P_r(f)(z)=\int_{\D}K^r(z,\zeta)f(\xi)(-\rho)^rdV\]
for $f\in L^2(\D,(-\rho)^r)$. The integral kernel $K^r(z,\xi)$ is called the 
Bergman kernel and the normalized Bergman kernel $k^r_z(\xi)$ is defined as 
$k_z^r(\xi)=\frac{K^r(\xi,z)}{\sqrt{K^r(z,z)}}$.
When $r=0$ we drop the superscript $r$; that is, $K=K_{\D}$ denotes the 
unweighted Bergman kernel and $k_z$ denotes the unweighted normalized Bergman 
kernel. For a bounded operator $T$ on $A^2\left(\D,(-\rho)^r\right)$, the 
Berezin transform $B_rT$ of $T$ is defined as 
\[B_rT(z)=\langle Tk_z^r,k_z^r\rangle_r\]
where  and $\langle .,.\rangle_r$ is the inner product on $A^2(\D,(-\rho)^r)$.

For $\phi\in L^{\infty}(\D)$, the weighted Toeplitz operator $T^r_{\phi}$ 
and the weighted Hankel operator $H^r_{\phi}$ are defined as follows 
\begin{align*}
T^r_{\phi}&=P^rM_{\phi}\\ 
H^r_{\phi}&=(I-P^r)M_{\phi}
\end{align*} 
where $M_{\phi}:A^2(\D,(-\rho)^r)\to L^2(\D,(-\rho)^r)$ denotes the 
multiplication by $\phi$.  

We use $\mathscr{T}(\Dc,(-\rho)^r)$ to denote the norm closed subalgebra of 
bounded linear operators on $A^2(\D,(-\rho)^r)$ generated by the set of 
Toeplitz operators $\{T^r_{\phi}:\phi\in C(\Dc)\}.$ For $\phi\in L^{\infty}$ we 
define  $B_r\phi=B_rT_{\phi}$.

In this paper we look at weighted Hankel and Toeplitz operators on  various domains 
and various weighted spaces. Whenever we need to clarify where these operators 
are defined, we will use appropriate subscripts and superscripts. In 
particular, when we need to emphasize the underlying domain we will write 
$P^{\D}, K_{\D}(z,\xi), H^{\D}_{\phi}$, and $T^{\D}_{\phi}$, where the Bergman 
spaces are unweighted. When we have weighted spaces and we need to indicate the 
domain and the weight we will write 
$P^{\D, r}, K_{\D}^r(z,\xi), H^{\D,r}_{\phi}$, and $T^{\D, r}_{\phi}$.

\subsection{Main Result}
We start with the following two definitions that capture the local structure 
of the main theorem. To motivate the following definition, if $f_j\to f$ weakly 
in $A^2(\D)$ then for any point $p\in b\D$ and $r>0$ one can show that $f_j\to f$ 
weakly in  $A^2(\D\cap B(p,r))$ where $B(p,r)$ is the open ball centered at $p$ with 
radius $r$. 

\begin{definition}
	Let $r\geq 0$ and $\D$ be a $C^2$-smooth 
	bounded pseudoconvex domain in $\C^n$ with a defining function $\rho$. 
	Furthermore, let $\{f_j\}\subset A^2(\D,(-\rho)^r)$ be a sequence and 
	$f\in A^2(\D,(-\rho)^r)$. We say  that $\{f_j\}$ converges to $f$ 
	\textit{weakly about strongly pseudoconvex points} if 
\begin{itemize}
\item[i.] $f_j\to f$ weakly in $A^2(\D,(-\rho)^r)$ as $j\to \infty$,
\item[ii.] in case $\Gamma_{\D}$, the set of the weakly  pseudoconvex 
points in $b\D$, is non-empty, there exists an open neighborhood $U$ of 
$\Gamma_{\D}$ such that $\|f_j-f\|_{L^2(U\cap \D,(-\rho)^r)}\to 0$ as 
$j\to \infty$. 
\end{itemize} 
\end{definition}

We note that on strongly pseudoconvex domains, sequences converging weakly  
about strongly pseudoconvex points and weakly convergent sequences coincide.

\begin{definition}
Let $r$, $\D$, and $\rho$ be as above. 
Furthermore, let  $T:A^2(\D,(-\rho)^r)\to A^2(\D,(-\rho)^r)$ be a bounded 
linear operator. We say that $T$ is \textit{compact about strongly pseudoconvex 
points} if $Tf_j\to Tf$ in $A^2(\D,(-\rho)^r)$ whenever $f_j\to f$ weakly about 
strongly pseudoconvex points.
\end{definition}

\begin{remark}
As shown in Proposition \ref{PropComp} below, it is interesting that any 
Hankel operator with a symbol continuous on the closure of the domain 
is compact about strongly pseudoconvex points. 
\end{remark}

With the help of these two definitions, we state our main result as follows.

\begin{theorem}\label{Thm1}
Let $r$ be a nonnegative real number, $\D$ be a $C^2$-smooth bounded pseudoconvex 
domain in $\C^n$ with a defining function $\rho$, and $T\in \mathscr{T}(\Dc,(-\rho)^r)$. 
Then $T$ is compact about strongly pseudoconvex points on 
$A^2(\D, (-\rho)^r)$ if and only if $\lim_{z\to p}B_rT(z)=0$ for any strongly 
pseudoconvex point $p\in b\D$.
\end{theorem}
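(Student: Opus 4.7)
The plan is to attack the two implications separately. For the forward direction, fix a strongly pseudoconvex point $p\in b\D$ and verify that $k_z^r$ tends to zero weakly about strongly pseudoconvex points as $z\to p$. Weak convergence in $A^2(\D,(-\rho)^r)$ follows from $K^r(z,z)\to\infty$ together with the pointwise bound $|g(z)|\le\|g\|_r\sqrt{K^r(z,z)}$ applied to a dense family of $g$'s. For $L^2$-decay on a neighborhood $U$ of $\Gamma_\D$, the mass of $|k_z^r|^2$ concentrates near $z$ by the reproducing property, so for $z$ close to $p\notin\overline U$ one gets $\|k_z^r\|_{L^2(U\cap\D,(-\rho)^r)}\to 0$. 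With $T$ compact about strongly pseudoconvex points this forces $\|Tk_z^r\|_r\to 0$, hence $|B_rT(z)|\le\|Tk_z^r\|_r\,\|k_z^r\|_r\to 0$.

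\textbf{Reverse direction, reduction to a single symbol.} Let $\mathcal K_{\mathrm{SP}}$ denote the class of bounded operators on $A^2(\D,(-\rho)^r)$ compact about strongly pseudoconvex points; a standard three-$\ep$ argument shows $\mathcal K_{\mathrm{SP}}$ is norm-closed, and evidently $S\cdot\mathcal K_{\mathrm{SP}}\subset\mathcal K_{\mathrm{SP}}$ for any bounded $S$. Reading Proposition \ref{PropComp} as the statement that $H^r_\psi$ (with $\psi\in C(\Dc)$) sends weakly-about-SP convergent sequences to $L^2$-norm convergent ones, the semi-commutator identity
\[T^r_\phi T^r_\psi - T^r_{\phi\psi} \;=\; -(H^r_{\bar\phi})^* H^r_\psi\]
places $T^r_\phi T^r_\psi - T^r_{\phi\psi}$ into $\mathcal K_{\mathrm{SP}}$, and induction then shows every finite product $T^r_{\phi_1}\cdots T^r_{\phi_k}$ equals $T^r_{\phi_1\cdots\phi_k}$ modulo $\mathcal K_{\mathrm{SP}}$. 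Approximating $T\in\mathscr T(\Dc,(-\rho)^r)$ in operator norm by finite sums of such products produces $T_n=T^r_{\phi_n}+K_n\to T$ with $\phi_n\in C(\Dc)$ and $K_n\in\mathcal K_{\mathrm{SP}}$. The forward direction applied to $K_n$, the standard boundary behavior $B_r\phi_n(z)\to\phi_n(p)$ as $z\to p$ for smooth $p\in b\D$ (from concentration of $|k_z^r|^2$ at $z$), and the obvious estimate $|B_r(T-T_n)(z)|\le\|T-T_n\|$ together force $|\phi_n(p)|\le\|T-T_n\|$ for every strongly pseudoconvex $p\in b\D$.

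\textbf{Reverse direction, localization.} By continuity pick a neighborhood $V_n\subset\Dc$ of $b\D\setminus\Gamma_\D$ on which $|\phi_n|<2\|T-T_n\|$, and choose a continuous partition of unity $1=\chi_1+\chi_2$ with $\chi_2$ supported in $V_n$ and $\chi_1$ supported at positive distance from $b\D\setminus\Gamma_\D$. Then $\|T^r_{\phi_n\chi_2}\|\le 2\|T-T_n\|$; meanwhile $\phi_n\chi_1$ has support meeting $b\D$ only inside $\Gamma_\D$, and for $f_j\to 0$ weakly about strongly pseudoconvex points the definition gives $L^2$-convergence on a neighborhood of $\Gamma_\D$ while interior Bergman estimates give uniform convergence on compact subsets of $\D$, so $\phi_n\chi_1 f_j\to 0$ in $L^2(\D,(-\rho)^r)$, placing $T^r_{\phi_n\chi_1}$ in $\mathcal K_{\mathrm{SP}}$. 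This writes $T$ as an element of $\mathcal K_{\mathrm{SP}}$ plus an operator of norm $O(\|T-T_n\|)$, and norm-closedness of $\mathcal K_{\mathrm{SP}}$ concludes.

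\textbf{Main obstacle.} The substantive input is Proposition \ref{PropComp}, asserting weighted Hankel compactness about strongly pseudoconvex points without a global $\dbar$-Neumann compactness assumption. Its proof is expected to proceed via the Forelli--Rudin/Ligocka inflation $\D\mapsto\widetilde\D\subset\C^{n+m}$, where $\widetilde\D$ is arranged to be B-regular at the inflated images of strongly pseudoconvex points, so that local compactness of the $\dbar$-Neumann operator on $\widetilde\D$ transfers down to compactness of weighted Hankel operators on $\D$ near its strongly pseudoconvex boundary. Carrying this correspondence through for general real $r\ge 0$ and faithfully translating ``local'' between $\D$ and $\widetilde\D$ is the delicate step that distinguishes this local, weighted version from \cite{CuckovicSahutoglu13}.
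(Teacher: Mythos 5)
Your overall architecture matches the paper's: forward direction from compactness to vanishing Berezin transform by verifying $k^r_z$ converges to $0$ weakly about strongly pseudoconvex points; reverse direction via the semi-commutator identity (Lemma~\ref{LemConvert}), the Arazy--Engli\v{s} boundary behavior $B_rT^r_\phi(z)\to\phi(p)$ at peak points, Proposition~\ref{PropComp} for the Hankel correction terms, and then a closure argument. The localization in your reverse direction (partition of unity $\chi_1+\chi_2$ keyed to a neighborhood of the strongly pseudoconvex boundary) is a minor cosmetic variant of the paper's choice of an auxiliary symbol $\psi_\ep$ vanishing on strongly pseudoconvex boundary points; both feed into Lemma~\ref{LemComp} and norm-closedness of the compact-about-SP class in the same way. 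Your identification of Proposition~\ref{PropComp} and the Forelli--Rudin inflation as the main technical obstacle is accurate.

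However, there is a genuine gap in your forward direction. You assert that $k^r_z\to 0$ weakly in $A^2(\D,(-\rho)^r)$ as $z\to p$ ``from $K^r(z,z)\to\infty$ together with the pointwise bound $|g(z)|\le\|g\|_r\sqrt{K^r(z,z)}$ applied to a dense family of $g$'s.'' For a general $g\in A^2(\D,(-\rho)^r)$ the estimate $|g(z)|\le\|g\|_r\sqrt{K^r(z,z)}$ only yields $|\langle g,k^r_z\rangle_r|\le\|g\|_r$, which does not decay; to get decay you need $g(z)=o\bigl(\sqrt{K^r(z,z)}\bigr)$ near $p$, e.g.\ $g$ bounded near $p$. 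But functions bounded near the boundary are \emph{not} known to be dense in $A^2(\D,(-\rho)^r)$ for a general $C^2$-smooth bounded pseudoconvex $\D$; this is exactly the content that Lemma~\ref{LemWeakConv} supplies and which you cannot wave away. The paper's route is indirect: since $p$ is strongly pseudoconvex it is a bumping point, so one bumps $\D$ outward near $p$, translates and patches $f$, and uses H\"ormander's $L^2$ estimate with the weight $-r\log(-\rho_1)$ to produce approximants $\widetilde f_n\to f$ in $A^2(\D,(-\rho)^r)$ that are $C^\infty$ up to the boundary \emph{only on a small neighborhood of $p$}. That local regularity near $p$ is all that is needed, together with $K^r(z,z)\to\infty$. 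Without this (or an equivalent) ingredient, your forward direction does not close.

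A smaller but related imprecision: your claim that the $L^2$-mass of $|k^r_z|^2$ ``concentrates near $z$ by the reproducing property'' needs a holomorphic (or plurisubharmonic) peak function at $p$ to be made rigorous; this is the content of \eqref{smallintegral} in Lemma~\ref{LemArazyEnglis}, and it is worth spelling out that strongly pseudoconvex points admit holomorphic peak functions.
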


If $\D$ is a strongly pseudoconvex domain then we have the following corollary.

\begin{corollary}\label{Cor1}
	Let $r$ be a nonnegative real number, $\D$ be a $C^2$-smooth bounded strongly 
	pseudoconvex domain in $\C^n$ with a defining function $\rho$, and 
	$T\in \mathscr{T}(\Dc,(-\rho)^r)$. Then $T$ is compact on $A^2(\D, (-\rho)^r)$ 
	if and only if $\lim_{z\to p}B_rT(z)=0$ for any $p\in b\D$.
\end{corollary}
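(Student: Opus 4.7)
The plan is to deduce Corollary \ref{Cor1} directly from Theorem \ref{Thm1} by observing that on a strongly pseudoconvex domain the two local notions introduced in the preceding definitions collapse to their standard counterparts. First I would note that if $\Omega$ is strongly pseudoconvex then every boundary point is a strongly pseudoconvex point, so the set $\Gamma_{\Omega}$ of weakly pseudoconvex boundary points is empty. Consequently condition (ii) in the definition of weak convergence about strongly pseudoconvex points is vacuous, and that notion reduces to ordinary weak convergence in $A^2(\Omega,(-\rho)^r)$. This is precisely the remark the authors already make immediately after the first definition.

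Next I would unpack the meaning of "compact about strongly pseudoconvex points" under this identification: it becomes the statement that $Tf_j\to Tf$ in norm whenever $f_j\to f$ weakly in $A^2(\Omega,(-\rho)^r)$. This is the standard Hilbert space characterization of compactness for a bounded linear operator. The forward direction uses that weakly convergent sequences are bounded and that compact operators send bounded sets to relatively compact sets; the reverse direction uses that any bounded sequence in a Hilbert space has a weakly convergent subsequence, and the hypothesis then converts weak convergence of that subsequence into norm convergence of its image. Hence on a strongly pseudoconvex domain, $T$ is compact about strongly pseudoconvex points if and only if $T$ is compact in the usual sense.

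With these two identifications in place, the corollary is immediate: the hypothesis on $B_rT$ in Theorem \ref{Thm1} ranges over all strongly pseudoconvex boundary points, which in the strongly pseudoconvex setting is all of $b\Omega$, and the conclusion "compact about strongly pseudoconvex points" becomes "compact." Applying Theorem \ref{Thm1} yields the stated equivalence. There is no genuine obstacle in this step; the substantive work is done in Theorem \ref{Thm1} itself, and Corollary \ref{Cor1} is simply its specialization to the globally strongly pseudoconvex case.
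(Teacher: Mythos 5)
Your proposal is correct and is exactly the intended deduction: the corollary is simply Theorem~\ref{Thm1} specialized to the strongly pseudoconvex case, using the observation the authors themselves make after the first definition that on such domains the ``about strongly pseudoconvex points'' notions reduce to ordinary weak convergence and ordinary compactness. The standard Hilbert space characterization of compactness (weak-to-norm sequential continuity) that you invoke is the right justification for the second identification.
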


\begin{remark}
	In the case of the unit ball $\mathbb{B}^n$ in $\mathbb{C}^n$ and $\rho(z)=|z|^2-1$, 
	we partially recover \cite[Theorem 1.1]{MitkovskiSuarezWick13}. Unlike the arguments 
	on the unit ball, the proof of Corollary \ref{Cor1} does not require any explicit form for 
	the weight or the weighted Bergman kernel. 
\end{remark}

\section{Proof of Theorem \ref{Thm1}}

In this section, before we prove Theorem \ref{Thm1}, we present some 
propositions and  lemmas that encapsulate the technical details of the proof.

\begin{proposition}\label{PropQuasiCompClosed}
	Let $\D$ be a $C^2$-smooth bounded pseudoconvex domain in $\C^n$ 
	and $\{T_j\}$ be a sequence of operators compact about strongly 
	pseudoconvex points  that converge to $T$ in the operator norm. Then $T$ is 
	compact about strongly pseudoconvex points. 
\end{proposition}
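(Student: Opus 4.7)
The plan is to run the standard $\epsilon/3$ argument used to show that the compact operators form a norm-closed ideal, adapted to the weaker notion of convergence used here.

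Let $\{f_j\} \subset A^2(\D,(-\rho)^r)$ converge to $f$ weakly about strongly pseudoconvex points. I want to show $\|Tf_j - Tf\|_r \to 0$. First I would observe that weak convergence in $A^2(\D,(-\rho)^r)$ (condition (i) in the definition) implies by the uniform boundedness principle that $M := \sup_j \|f_j\|_r + \|f\|_r < \infty$.

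Now fix $\ep > 0$ and, using $T_j \to T$ in operator norm, choose $k$ so large that $\|T - T_k\| < \ep / (2M+1)$. Writing
\[
Tf_j - Tf = (T - T_k)f_j + (T_k f_j - T_k f) + (T_k - T)f,
\]
the first and third terms have norm at most $\|T-T_k\|(\|f_j\|_r + \|f\|_r) \leq \ep$ uniformly in $j$. For the middle term, since $T_k$ is compact about strongly pseudoconvex points and $\{f_j\}$ converges to $f$ weakly about strongly pseudoconvex points (the hypotheses are unchanged at this step), we have $\|T_k f_j - T_k f\|_r \to 0$ as $j \to \infty$. Hence $\limsup_{j\to\infty} \|Tf_j - Tf\|_r \leq \ep$, and since $\ep$ was arbitrary we conclude that $Tf_j \to Tf$ in $A^2(\D,(-\rho)^r)$.

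There is not really a serious obstacle: the argument is the verbatim analogue of the proof that a norm limit of compact operators is compact, and the only point to check is that the notion of convergence on the input side (weakly about strongly pseudoconvex points) is preserved under the triangle inequality, which it trivially is since we use the same input sequence $\{f_j\}$ throughout. The only place one must be a bit careful is invoking uniform boundedness of $\{f_j\}$ from the weak convergence in $A^2(\D,(-\rho)^r)$, which comes for free from part (i) of the definition regardless of whether there are weakly pseudoconvex boundary points.
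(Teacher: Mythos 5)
Your proof is correct and follows essentially the same route as the paper: both run the standard norm-closure argument, extracting uniform boundedness of $\{f_j\}$ from weak convergence via the uniform boundedness principle and then splitting off $\|T-T_k\|$ before applying the compactness-about-strongly-pseudoconvex-points of $T_k$. The only cosmetic difference is that the paper reduces to the case $f=0$ while you keep general $f$ with a three-term decomposition.
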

\begin{proof}
	Let $\{f_j\}$ be a sequence in $A^2(\D, (-\rho)^r)$ that converges to 0 weakly 
	about strongly pseudoconvex points.	 Since $f_j\to 0$ weakly there exists 
	$C>0$ such that 
	\[\sup\{\|f_j\|:j=1,2,3,\ldots\}\leq C.\] 
	Then for any $k$ we have 
	\[ \|Tf_j\|\leq \|(T-T_k)f_j\|+\|T_kf_j\|\leq C\|T-T_k\|+\|T_kf_j\|.\]
	Let $\ep>0$ be given. Since $T_j\to T$ in the operator norm, 
	we choose $k_{\ep}$ such that  $\|T-T_{k_{\ep}}\|\leq \ep$. Then 
	\[\limsup_{j\to \infty}\|Tf_j\|\leq C\ep+\limsup_{j\to\infty}\|T_{k_\ep}f_j\|\leq C\ep.\]
	Since $\ep>0$ was arbitrary we conclude that $Tf_j\to 0$. That is, $T$ is 
	compact about strongly pseudoconvex points.
\end{proof}

One of the key ideas in the proof is to use 
an inflated domain over $\D$ to understand the weighted Bergman spaces. For 
this purpose, unless stated otherwise, for the rest of the  paper, $\D$ will be 
a bounded pseudoconvex domain in $\C^n$ with $C^2$-smooth boundary, 
$\rho$ will be a defining function for $\D$, and 
\begin{align}\label{EqnDomain}
\D_r^p=\left\{(z,w)\in \C^n\times \C^p:z\in \D \text{ and }
\rho(z)+|w_1|^{2p/r} +\cdots+|w_p|^{2p/r}<0\right\} 
\end{align}
where $p$ is a positive integer and $r$ is a real number such that $0<r\leq p$. 
For a function $f\in A^2(\D, (-\rho)^r)$, we let $F(z,w)=f(z)$ be the 
trivial extension of $f$ to $\D^p_r$. It easily follows from an iterated 
integral argument that $F\in A^2(\D^p_r )$.

The following proposition is interesting in its own right as it gives a relationship 
between the Bergman kernels of the inflated domain and base. 

\begin{proposition}\label{PropKernel}
Using the notation above 
\[K_{\D}^r(z,\xi)=c_{p,r}K_{\D_r^p}(z,0;\xi,0)\] 
where $c_{p,r}=\int_{ |\widetilde{w}_1|^{2p/r}+\cdots+|\widetilde{w}_p|^{2p/r}< 1}
dV(\widetilde{w})$ and $K_{\D}^r(z,\xi)$  is the weighted Bergman kernel of $\D$ 
with weight $(-\rho)^r$.
\end{proposition}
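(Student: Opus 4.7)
The plan is to exploit the rotational symmetry of $\D_r^p$ in the $w$-fibers to show that $K_{\D_r^p}((z,0);(\xi,\eta))$ is independent of $\eta$, then carry out a Fubini computation on the reproducing identity applied to the trivial extension $F(z,w)=f(z)$, and finally invoke uniqueness of the weighted Bergman kernel. The one non-routine point is the $\eta$-independence, which will follow from a $U(1)^p$-invariance argument on the kernel; everything else is bookkeeping.

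First I would verify the symmetry. For any real numbers $\theta_1,\ldots,\theta_p$, the map $\Phi(z,w)=(z,e^{i\theta_1}w_1,\ldots,e^{i\theta_p}w_p)$ is a biholomorphism of $\D_r^p$ whose complex Jacobian determinant equals $e^{i(\theta_1+\cdots+\theta_p)}$, a constant of modulus one. The transformation law for the Bergman kernel therefore gives
\[K_{\D_r^p}(\Phi(z,w);\Phi(\xi,\eta)) = K_{\D_r^p}((z,w);(\xi,\eta)),\]
and specializing to $w=0$ shows that $K_{\D_r^p}((z,0);(\xi,\eta))$ is invariant under each independent rotation $\eta_j\mapsto e^{i\theta_j}\eta_j$. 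Since this kernel is antiholomorphic in $\eta$, its antiholomorphic Taylor expansion in the variables $\bar\eta_j$ can contain only the constant term, so
\[K_{\D_r^p}((z,0);(\xi,\eta))=K_{\D_r^p}((z,0);(\xi,0)) \qquad \text{for all } (\xi,\eta)\in\D_r^p.\]
A symmetric argument with $\eta=0$ and $w$ rotated will similarly show that $K_{\D_r^p}((z,w);(\xi,0))$ is independent of $w$, a fact I will need for the norm estimate below.

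Next I would apply the reproducing property in $A^2(\D_r^p)$ to $F(z,w)=f(z)$ at the point $(z,0)$, use the $\eta$-independence from the previous step, and integrate out $\eta$ over the fiber $\{\eta\in\C^p:\sum_j|\eta_j|^{2p/r}<-\rho(\xi)\}$. Under the substitution $\eta_j=(-\rho(\xi))^{r/(2p)}\widetilde\eta_j$ this fiber has volume $c_{p,r}(-\rho(\xi))^r$, and Fubini yields
\[f(z)=c_{p,r}\int_\D K_{\D_r^p}((z,0);(\xi,0))\,f(\xi)(-\rho(\xi))^r\,dV(\xi)\]
for every $f\in A^2(\D,(-\rho)^r)$.

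Finally I would check that $z\mapsto K_{\D_r^p}((z,0);(\xi,0))$ lies in $A^2(\D,(-\rho)^r)$. Holomorphicity in $z$ is immediate, and the same Fubini/substitution applied together with the $w$-independence of $K_{\D_r^p}((\cdot,\cdot);(\xi,0))$ gives
\[c_{p,r}\int_\D|K_{\D_r^p}((z,0);(\xi,0))|^2(-\rho(z))^r\,dV(z)=\|K_{\D_r^p}(\cdot,\cdot;(\xi,0))\|^2_{L^2(\D_r^p)}=K_{\D_r^p}((\xi,0);(\xi,0))<\infty.\]
Uniqueness of the reproducing kernel of $A^2(\D,(-\rho)^r)$ then forces $K_{\D}^r(z,\xi)=c_{p,r}K_{\D_r^p}((z,0);(\xi,0))$, completing the proof.
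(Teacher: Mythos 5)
Your proof is correct. It follows the same overall Forelli--Rudin strategy as the paper (reproduce the trivial extension $F$ at $(z,0)$, integrate out the fiber via the dilation $\eta_j=(-\rho(\xi))^{r/2p}\widetilde\eta_j$, and identify the kernel), but the key cancellation step is justified differently. The paper expands $K_{\D_r^p}(z,w;\xi,\eta)$ as a Hartogs series $K_{\D_r^p}(z,0;\xi,0)+\sum_{|J|\geq 1}K_J(z,\xi)w^J\overline{\eta}^J$ and observes that the integrals of the higher-order terms against $F$ vanish; you instead use the transformation law under the torus of rotations $(z,w)\mapsto(z,e^{i\theta_1}w_1,\ldots,e^{i\theta_p}w_p)$, whose unimodular constant Jacobian shows that $K_{\D_r^p}((z,0);(\xi,\eta))$ is rotation-invariant and hence, being antiholomorphic in $\eta$, constant in $\eta$. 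The two mechanisms rest on the same Reinhardt symmetry of the fibers and are essentially interchangeable, though yours avoids having to justify the convergence and term-by-term integration of the series. You also explicitly verify that $z\mapsto K_{\D_r^p}((z,0);(\xi,0))$ lies in $A^2(\D,(-\rho)^r)$ before invoking uniqueness of the reproducing kernel --- a step the paper leaves implicit --- and your norm identity $c_{p,r}\int_\D|K_{\D_r^p}((z,0);(\xi,0))|^2(-\rho(z))^r\,dV(z)=K_{\D_r^p}((\xi,0);(\xi,0))$ is a nice way to close that gap.
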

\begin{proof}
We will follow a standard inflation argument (see for instance 
\cite{ForelliRudin74/75,Ligocka89}). Since $\D_r^p$ is a Hartogs domain with 
base $\D$, the Bergman kernel of $\D_r^p$ can be written as 
\[K_{\D_r^p}(z,w;\xi,\eta)=K_{\D_r^p}(z,0;\xi,0)+\sum_{|J|\geq 1}
K_J(z, \xi)w^J\overline{\eta}^J\]
where $J$ is a multiindex with nonnegative entries. 
Then for $f\in A^2(\D,(-\rho)^r)$ and $z\in \D$ we have 
($F$ below is the trivial extension of $f$)
\begin{align}\label{EqnExpansion}
f(z)=\int_{\D_r^p}K_{\D_r^p}(z,0;\xi,0)F(\xi,\eta)dV(\xi , \eta) 
+\sum_{|J|\geq1}\int_{\D_r^p}K_J(z,\xi)w^J\overline{\eta}^JF(\xi,\eta)
dV(\xi,\eta). 
\end{align}
However, the integrals under the sum on the right hand side above all vanish. 

Using the change of variables $\widetilde{w}_j= \frac{w_j}{(-\rho(z))^{r/2p}}$
one can compute that 
\begin{align}\label{EqnDilation}
	\int_{|w_1|^{2p/r}+\cdots+|w_p|^{2p/r}< -\rho(z)} dV(w)
	=(-\rho(z))^r\int_{|\widetilde{w}_1|^{2p/r}
+\cdots+|\widetilde{w}_p|^{2p/r}<1}dV(\widetilde{w}). 
\end{align}
	We denote 
\begin{align}\label{EqnDilation*}
c_{p,r}=\int_{|\widetilde{w}_1|^{2p/r}
+\cdots+|\widetilde { w } _p|^ { 2p/r } < 1}
dV(\widetilde{w}). 
\end{align}
Then using \eqref{EqnExpansion},\eqref{EqnDilation}, and 
\eqref{EqnDilation*}   we get 
\[ f(z)=\int_{\D_r^p}K_{\D_r^p}(z,0;\xi,0)F(\xi,\eta)dV(\xi,\eta)
= c_{p,r}\int_{\D}K_{\D_r^p}(z,0;\xi,0)f(\xi)(-\rho(\xi))^rdV(\xi).\]
Therefore,  $c_{p,r}K_{\D_r^p}(z,0;\xi,0)=K_{\D}^r(z,\xi)$.
\end{proof}

For a $C^2$-smooth function $\rho$ around a point 
$P\in \C^n, X=(x_1,\ldots,x_n)\in \C^n$, and 
$Y=(y_1,\ldots,y_n)\in \C^n$, we define the complex Hessian of $\rho$ at $P$ as 
\[H_{\rho}(P;X, Y)
=\sum_{j,k=1}^n \frac{\partial^2\rho (P)}{\partial z_j\partial \zb_k} x_j\overline{y_k}.\]
Furthermore, we use the notation $H_{\rho}(P;X)=H_{\rho}(P;X, X)$. 

\begin{lemma}\label{LemInflation}
Let $\D$ be a $C^2$-smooth bounded pseudoconvex domain in $\C^n$, 
$z_0\in b\D$ be a strongly pseudoconvex point, and $\D^p_r$ be defined 
as in \eqref{EqnDomain}. Then there exists $s>0$ such that $(z,w)\in b\D_r^p$ 
is strongly pseudoconvex for $|z-z_0|<s$ and $w_k\neq 0$ for all $1\leq k\leq p$.  
\end{lemma}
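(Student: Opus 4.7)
The plan is to verify strong pseudoconvexity directly from the defining function
\[\widetilde\rho(z,w)=\rho(z)+\sum_{k=1}^p|w_k|^{2\alpha},\qquad \alpha:=p/r\ge 1,\]
of $\D^p_r$. Because $\rho$ and each $|w_k|^{2\alpha}$ depend on disjoint variables, the complex Hessian of $\widetilde\rho$ at any point $(z,w)$ with all $w_k\ne 0$ is block diagonal, and
\[H_{\widetilde\rho}((z,w);(X,Y))=H_\rho(z;X)+\alpha^2\sum_{k=1}^p|w_k|^{2(\alpha-1)}|Y_k|^2.\]
The $w$-block is strictly positive definite as soon as all $w_k\ne 0$; for $\alpha>1$ it collapses at $w=0$, which is precisely why the hypothesis $w_k\ne 0$ is essential. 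The tangent hyperplane of $b\D^p_r$ at $(z,w)$ is cut out by $a^TX+b^TY=0$, where $a_j=\partial\rho/\partial z_j(z)$ and $b_k=\alpha|w_k|^{2(\alpha-1)}\bar w_k$.

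Strong pseudoconvexity at $z_0$ yields $\mu_0>0$ with $H_\rho(z_0;X)\ge\mu_0|X|^2$ for $X\in\mathcal{H}(z_0):=\{X:\sum_j(\partial\rho/\partial z_j)(z_0)X_j=0\}$. By continuity of $H_\rho$ and $\partial\rho$, I can choose $s>0$ so that, for $|z-z_0|<s$, one has $H_\rho(z;X')\ge(\mu_0/2)|X'|^2$ on $\mathcal{H}(z):=\{X':a^TX'=0\}$ and $|a|\ge c_0>0$. For a nonzero tangent vector $(X,Y)$ at $(z,w)$, I decompose $X=X_T+\lambda\bar a$ with $X_T\in\mathcal{H}(z)$; since $a^T\bar a=|a|^2$, the constraint forces $\lambda=-b^TY/|a|^2$. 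A weighted Cauchy--Schwarz together with the boundary relation $\sum_k|w_k|^{2\alpha}=-\rho(z)$ gives the key bound
\[|b^TY|^2\le\Bigl(\sum_k\tfrac{|b_k|^2}{\alpha^2|w_k|^{2(\alpha-1)}}\Bigr)\alpha^2\sum_k|w_k|^{2(\alpha-1)}|Y_k|^2=(-\rho(z))\cdot\alpha^2\sum_k|w_k|^{2(\alpha-1)}|Y_k|^2,\]
since $|b_k|^2/(\alpha^2|w_k|^{2(\alpha-1)})=|w_k|^{2\alpha}$.

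Expanding $H_\rho(z;X)=H_\rho(z;X_T)+2\operatorname{Re}(\bar\lambda\,H_\rho(z;X_T,\bar a))+|\lambda|^2H_\rho(z;\bar a)$ and absorbing the cross term by AM--GM against $(\mu_0/2)|X_T|^2$ yields $H_\rho(z;X)\ge(\mu_0/4)|X_T|^2-C_1|\lambda|^2|a|^2$, with $C_1$ depending only on $\mu_0$ and a uniform bound on $H_\rho$ near $z_0$. Combining with the Cauchy--Schwarz estimate,
\[H_{\widetilde\rho}((z,w);(X,Y))\ge\tfrac{\mu_0}{4}|X_T|^2+\Bigl(1-\tfrac{C_1(-\rho(z))}{|a|^2}\Bigr)\alpha^2\sum_k|w_k|^{2(\alpha-1)}|Y_k|^2.\]
Since $-\rho(z)\le L|z-z_0|$ while $|a|\ge c_0$, shrinking $s$ further makes the parenthetical at least $1/2$, so the right-hand side vanishes only if $X_T=0$ and $Y=0$; the constraint then forces $\lambda=0$ and hence $X=0$, contradicting $(X,Y)\ne 0$. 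Thus the Levi form is strictly positive on the tangent hyperplane, giving strong pseudoconvexity. The main obstacle is the complex normal direction $\bar a$: since $H_\rho(z)$ need not be positive there, any naive estimate on the full $X=X_T+\lambda\bar a$ is useless, and the weighted Cauchy--Schwarz is exactly the mechanism that converts the smallness of $-\rho(z)$ near $b\D$ into smallness of $|\lambda|$, letting the strictly positive $w$-block absorb the bad normal contribution.
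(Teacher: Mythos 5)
Your argument is correct, and it follows the same broad strategy as the paper's own proof: split the Levi form of $\widetilde\rho$ into a $z$-block and a $w$-block, decompose the $z$-component of the tangent vector into a piece tangential to $b\D$ at $z$ plus a piece along the complex normal $\bar a$, use strong pseudoconvexity at $z_0$ to control the tangential piece, and use the strict plurisubharmonicity of $\lambda$ (when all $w_k\ne 0$) for the $w$-block. What you add is a sharper quantitative step: the weighted Cauchy--Schwarz estimate
\[
|b\cdot Y|^2\le(-\rho(z))\cdot\alpha^2\sum_k|w_k|^{2(\alpha-1)}|Y_k|^2,
\]
coming from the boundary identity $\sum_k|w_k|^{2\alpha}=-\rho(z)$, together with $\lambda=-b\cdot Y/|a|^2$, which expresses the normal defect $|\lambda|^2|a|^2$ as $(-\rho(z))/|a|^2$ times the $w$-Hessian. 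This cleanly shows that the possibly negative normal contribution $-C_1|\lambda|^2|a|^2$ is absorbed by the $w$-block once $-\rho(z)$ is small. The paper's proof instead argues qualitatively that $X_\nu\to 0$ and asserts $H_\rho(z;X_n)\ge\tfrac12 H_\rho(\pi z;X_t)>0$ for $X_t\ne 0$, which as written does not by itself cover tangent vectors whose $z$-tangential part $X_t$ is small (or vanishes) while $X_\nu\ne 0$ --- precisely the regime where the normal contribution must be weighed against the $w$-Hessian. Your Cauchy--Schwarz bound handles that regime explicitly, so your version of the argument is, if anything, more complete and more quantitative than the printed one while remaining the same in spirit.
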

\begin{proof}
Let $\widetilde{\rho}(z,w)=\rho(z)+\lambda(w)$ where 
$\lambda(w)=|w_1|^{2p/r}+\cdots+ |w_p|^{2p/r}$ and $p\geq r$ an integer. 
Then $\widetilde{\rho}$ is a $C^2$-smooth function. 
Assume that $Q=(z,w)\in b\D_r^p$ is near $z_0$ and $X$ is a complex 
tangential vector to $b\D_r^p$ at $Q$. Then $X$ can be written as $X=X_n+X_p$ 
where $X_n$ and $X_p$ are the components of $X$ in the $z$ and $w$ variables, 
respectively. Then 
\[H_{\widetilde{\rho}}(Q;X)= H_{\rho}(z;X_n)+H_{\widetilde{\rho}}(Q;X_n, X_p)
+H_{\widetilde{\rho}}(Q;X_p,X_n)+H_{\lambda}(w;X_p).\]
However, $H_{\widetilde{\rho}}(Q;X_n, X_p)=H_{\widetilde{\rho}}(Q;X_p,X_n)=0$ as 
$z$ and $w$ are decoupled in $\widetilde{\rho}$. Then 
\[H_{\widetilde{\rho}}(Q;X)= H_{\rho}(z;X_n)+H_{\lambda}(w;X_p).\]
Let $\pi$ denote the projection from a neighborhood of $b\D$ in $\C^n$ onto 
$b\D$. Then  $X_n=X_t+X_{\nu}$ where $X_t$ is a tangential vector to $b\D$ at 
$\pi z$  and $X_{\nu}$ is a vector complex normal to $b\D$ at $\pi z$. Then 
\[H_{\rho}(z;X_n)=H_{\rho}(z;X_t)+H_{\rho}(z;X_t,X_{\nu})
+H_{\rho}(z;X_{\nu},X_t)+H_{\rho}(z;X_{\nu}).\]
We note that the complex Hessian $H_{\rho}$ changes continuously and 
$w\to 0$ as  $z\to z_0$ (here we assume that $(z,w)\in b\D_r^p$). 
Furthermore,  $X_{\nu}\to 0$ as $z\to z_0$ (as the complex normal 
to $b\D$ at $z_0$ is parallel to the complex normal to $b\D_r^p$ at $(z_0,0)$). 
Then, using the fact that $z_0$ is a strongly pseudoconvex point,  
we conclude that there exists $s>0$ so that 
 \[H_{\rho}(z;X_n)\geq \frac{H_{\rho}(\pi z;X_t)}{2} > 0\]
 for $|z-z_0|<s$ and $X_t\neq 0$. Also $H_{\lambda}(w;X_p)>0$ 
whenever  $X_p\neq 0$ and $w_k\neq 0$ for all $k$ as $\lambda$ is strictly 
plurisubharmonic whenever $w_k\neq 0$ for all $k$. Therefore, 
$H_{\widetilde{\rho}}(Q;X)>0$ for  $Q=(z,w)\in b\D_r^p$ 
such that $|z-z_0|<s$ and $w_k\neq 0$ for all $k$. 
\end{proof}

The following corollary follows from the previous lemma together with the 
fact that $\D^p_r$ has $C^2$-smooth boundary for $0<r\leq p$.
\begin{corollary}\label{CorInflation}
	Let $\D$ be a $C^2$-smooth bounded pseudoconvex domain in $\C^n$, 
	$z_0\in b\D$ be a strongly pseudoconvex point, and $\D^p_r$ be defined 
	as in \eqref{EqnDomain}. Then there exists $\ep>0$ such that 
	$B((z_0,0),\ep)\cap \D_r^p$ is pseudoconvex.
\end{corollary}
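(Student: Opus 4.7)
The plan is to combine Lemma \ref{LemInflation}, which already provides strong pseudoconvexity on a large open subset of the boundary near $(z_0,0)$, with a continuity argument that exploits the $C^2$-smoothness of the defining function $\widetilde{\rho}(z,w) = \rho(z) + |w_1|^{2p/r} + \cdots + |w_p|^{2p/r}$ of $\D_r^p$. First I would check that the hypothesis $0<r\leq p$ forces $2p/r\geq 2$, so that each summand $(w_j\overline{w}_j)^{p/r}$ has continuous second partial derivatives at $w_j=0$; consequently $\widetilde{\rho}$ is $C^2$, the complex tangent bundle of $b\D_r^p$ varies continuously, and the Levi form $H_{\widetilde{\rho}}$ is a continuous function on the boundary.

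Next I would verify that the boundary points covered by Lemma \ref{LemInflation} — namely those $(z,w)\in b\D_r^p$ with $|z-z_0|<s$ and every $w_k\neq 0$ — are dense in $b\D_r^p\cap B((z_0,0),s)$. This is a short perturbation argument: given $(z,w)\in b\D_r^p$ with some $w_k=0$, I would perturb $w_k$ slightly away from zero and then shift $z$ by a small amount along the complex normal to $b\D$ to restore the equation $\rho(z')+|w'_1|^{2p/r}+\cdots+|w'_p|^{2p/r}=0$; such an adjustment is always available because $\nabla\rho$ does not vanish on $b\D$.

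With these two ingredients in hand, I would conclude as follows. I pick a $C^1$ local frame for the complex tangent bundle of $b\D_r^p$ in a neighborhood of $(z_0,0)$ and express the restriction of $H_{\widetilde{\rho}}$ to that bundle as a Hermitian matrix with continuous entries. By Lemma \ref{LemInflation} this matrix is positive definite on the dense subset above, so continuity of eigenvalues forces it to be positive semi-definite at every boundary point inside some ball $B((z_0,0),\ep)$. This is exactly Levi pseudoconvexity of $b\D_r^p$ throughout that neighborhood, and since $\D_r^p$ carries a $C^2$-smooth defining function, Levi pseudoconvexity at every boundary point of $B((z_0,0),\ep)\cap \D_r^p$ is equivalent to pseudoconvexity of the open set $B((z_0,0),\ep)\cap \D_r^p$ itself.

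The main obstacle I anticipate is making the continuity step rigorous, since the complex tangent space at $Q\in b\D_r^p$ depends on $Q$, so strict positivity of $H_{\widetilde{\rho}}$ at one point is not literally a statement about a single fixed quadratic form. Passing to a fixed local frame reduces the problem to pointwise positivity of a Hermitian matrix with continuous entries on a dense subset of the boundary, for which the extension to positive semi-definiteness on the closure of that subset is standard.
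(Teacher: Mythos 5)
Your argument is correct and matches the paper's intent: the corollary is presented there as following directly from Lemma~\ref{LemInflation} together with the $C^2$-smoothness of $\widetilde{\rho}$ (which is where $0<r\leq p$ enters), and your density-plus-continuity step simply makes that explicit. One minor refinement: since $B((z_0,0),\ep)\cap \D_r^p$ has corners where $b\D_r^p$ meets $bB((z_0,0),\ep)$, it is cleaner to first conclude that $\D_r^p$ is pseudoconvex in a full neighborhood of $(z_0,0)$ and then intersect with the ball, which is itself pseudoconvex, rather than invoking the Levi criterion on the piecewise-smooth boundary of the intersection.
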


Next we will prove some statements about compactness of single Toeplitz 
and Hankel operators.

\begin{lemma}\label{LemSlice}
Let $\phi\in L^{\infty}(\D), \{f_j\}$ be a bounded sequence in  
$A^2(\D, (-\rho)^r)$ and  $F_j$ be the trivial extension of $f_j$ to $\D_r^p$ 
for each $j$ where $\D^p_r$ be defined as in \eqref{EqnDomain}. 
Assume that $\{H^{\D_r^p}_{\phi}F_j\}$ is convergent in  
$L^2(\D_r^p)$. Then $\{H^{\D,r}_{\phi}f_j\}$ is convergent in 
$L^2(\D, (-\rho)^r)$.  
\end{lemma}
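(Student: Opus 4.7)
The plan is to prove an exact identification between the two Hankel operators, reducing the lemma to a one-line norm conversion. Specifically, I will show that $H^{\D_r^p}_{\phi}F_j$ is itself the trivial extension of $H^{\D,r}_{\phi}f_j$, after which the standard scaling of $L^2$-norms by $\sqrt{c_{p,r}}$ under trivial extension closes the argument. The boundedness hypothesis on $\{f_j\}$ will play no role; the conclusion flows from linearity applied to differences.

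Set $g_j:=\phi f_j\in L^2(\D,(-\rho)^r)$ and write $\widetilde{h}(z,w):=h(z)$ for trivial extension. Since $\phi$ depends only on $z$, we have $G_j:=\phi F_j=\widetilde{g_j}$. Using the Hartogs expansion
\[K_{\D_r^p}(z,w;\xi,\eta)=K_{\D_r^p}(z,0;\xi,0)+\sum_{|J|\geq 1}K_J(z,\xi)w^J\overline{\eta}^J\]
from the proof of Proposition \ref{PropKernel}, I would observe that because $\widetilde{g_j}$ is $\eta$-independent, each $|J|\geq 1$ term contributes $\int_{\lambda(\eta)<-\rho(\xi)}\overline{\eta}^J\,dV(\eta)=0$ by the rotational symmetry $\eta_k\mapsto e^{i\theta_k}\eta_k$ of the $\eta$-slice. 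Only $J=0$ survives, so by \eqref{EqnDilation} and Proposition \ref{PropKernel},
\[P^{\D_r^p}\widetilde{g_j}(z,w)=c_{p,r}\int_{\D}K_{\D_r^p}(z,0;\xi,0)g_j(\xi)(-\rho(\xi))^r dV(\xi)=\int_{\D}K_{\D}^r(z,\xi)g_j(\xi)(-\rho(\xi))^r dV(\xi)=P^{\D,r}g_j(z),\]
which is $w$-independent. Thus $P^{\D_r^p}\widetilde{g_j}=\widetilde{P^{\D,r}g_j}$, and so $H^{\D_r^p}_{\phi}F_j=\widetilde{H^{\D,r}_{\phi}f_j}$.

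To finish, I would invoke the norm identity $\|\widetilde{h}\|_{L^2(\D_r^p)}^2=c_{p,r}\|h\|_{L^2(\D,(-\rho)^r)}^2$, immediate from Fubini and \eqref{EqnDilation}. Applied by linearity to $f_j-f_k$:
\[\|H^{\D_r^p}_{\phi}(F_j-F_k)\|_{L^2(\D_r^p)}^2=c_{p,r}\|H^{\D,r}_{\phi}(f_j-f_k)\|_{L^2(\D,(-\rho)^r)}^2.\]
Cauchyness of $\{H^{\D_r^p}_{\phi}F_j\}$ therefore transfers to $\{H^{\D,r}_{\phi}f_j\}$, giving convergence in $L^2(\D,(-\rho)^r)$ by completeness. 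The one nontrivial step is the identity $P^{\D_r^p}\widetilde{g_j}=\widetilde{P^{\D,r}g_j}$; it rests on the vanishing $\int\overline{\eta}^J dV(\eta)=0$ for $|J|\geq 1$ combined with the base kernel identity of Proposition \ref{PropKernel}, everything else being bookkeeping.
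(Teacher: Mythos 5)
Your argument is correct and takes a genuinely different route from the paper's, and in fact a sharper one. The paper never proves the exact identity $H^{\D_r^p}_{\phi}F_j=\widetilde{H^{\D,r}_{\phi}f_j}$: it shows only that $H^{\D_r^p}_{\phi}F_j$ is holomorphic (not necessarily constant) in $w$, applies the sub-mean-value inequality for $|H^{\D_r^p}_{\phi}(F_j-F_k)|^2$ on the Reinhardt $w$-fibers to bound the $L^2(\D,(-\rho)^r)$-norm of the slice at $w=0$ by $c_{p,r}^{-1/2}$ times the $L^2(\D_r^p)$-norm (a one-sided estimate), and then must apply $(I-P^{\D,r})$ to that slice to recover $H^{\D,r}_{\phi}f_j$, after separately checking that the sliced projection term lies in $A^2(\D,(-\rho)^r)$. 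You instead promote the Forelli--Rudin slicing to an exact intertwining $P^{\D_r^p}\widetilde{g_j}=\widetilde{P^{\D,r}g_j}$, which turns the one-sided estimate into an equality and collapses the sub-mean-value argument and the final projection step into nothing; it also exhibits the cleaner structural fact that, up to the scalar $c_{p,r}$, $H^{\D,r}_{\phi}$ is the restriction of $H^{\D_r^p}_{\phi}$ to trivially extended data. The one place I would tighten your write-up is the interchange of the Hartogs kernel sum with the $\eta$-integral: rather than manipulating the series termwise, it is cleaner to use the orthogonal decomposition of $A^2(\D_r^p)$ into $w$-Fourier modes $A_J=\{h(z)w^J : h\in\mathcal{O}(\D)\}$, note that $\widetilde{g_j}\perp A_J$ for $|J|\geq 1$ by the rotational symmetry of the fibers, and that on $A_0$ the pairing scales by $c_{p,r}$; then $P^{\D_r^p}\widetilde{g_j}=\Pi_0\widetilde{g_j}=\widetilde{P^{\D,r}g_j}$ is immediate without any kernel expansion. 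You are also right that the boundedness hypothesis on $\{f_j\}$ plays no role in either argument.
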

\begin{proof}
We will abuse the notation and denote the trivial extension of $\phi$ to $\D_r^p$ 
by $\phi$. We assume that $\{H^{\D_r^p}_{\phi}F_j\}$ is convergent (and 
hence Cauchy). Let 
\[G_j(z,w)= (H^{\D_r^p}_{\phi}F_j)(z,w)\] 
and $g_j(z)=G_j(z,0)$. Then $G_j$ is holomorphic in $w$ because 
\[\frac{\partial G_j}{\partial \wb_k} 
=\frac{\partial }{\partial \wb_k}(I-P^{\D_r^p})(F_j\phi)
=\frac{\partial (F_j\phi) }{\partial \wb_k}=0\] 
for all $j$ and $1\leq k\leq p$. We note that 
$\frac{\partial (F_j\phi) }{\partial \wb_k}=0$ 
as $F_j\phi$ is independent of $w_k$. 
 Then $|G_j(z,w)-G_k(z,w)|^2$ is subharmonic in $w$ and using 
the mean value property for subharmonic functions together with 
\eqref{EqnDilation} and \eqref{EqnDilation*} one can show that 	
\begin{align*}\label{mean}
|g_j(z)-g_k(z)|^2\leq \frac{1}{c_{p,r} (-\rho(z))^r} \int_{|w_1|^{2p/r}+\cdots+|w_p|^{2p/r}
	< -\rho(z)}|G_j(z,w)-G_k(z,w)|^2dV(w)
\end{align*}
for $j=1,2,\ldots$ and $z\in \D$.  By integrating over $\D$ we get  	 
\[c_{p,r} \|g_j-g_k\|^2_{L^{2}_{(0,1)}(\D, (-\rho)^r)}\leq \|G_j-G_k\|^2_{L^{2}_{(0,1)}(\D_r^p)}\]
 for $j,k=1,2,\ldots.$ Then $\{g_j\}$ is a Cauchy sequence in $L^{2}_{(0,1)}(\D, (-\rho)^r)$  
 (and hence convergent) because $\|G_j-G_k\|_{L^{2}_{(0,1)}(\D_r^p)}\to 0$  
 as $j,k\to \infty$. 

Let $h_j(z)=P^{\D_r^p}(\phi F_j)(z,0)$. Then 
\[c_{r,p}\|h_j\|^2_{L^2(\D,(-\rho)^r)}\leq \|P^{\D_r^p}(\phi F_j)\|^2_{L^2(\D_r^p)} 
\leq \|\phi F_j\|^2_{L^2(\D_r^p)}=c_{r,p}\|\phi f_j\|^2_{L^2(\D,(-\rho)^r)}<\infty\]
for each $j$. Hence, $h_j\in A^{2}(\D, (-\rho)^r)$ and $(I-P^{\D,r})h_j =0$ for all $j$. 
We get equality between the last terms above because $F_j$ and $\phi$ are 
independent of $w$. Now 
\begin{align*}
(I-P^{\D,r})g_j&=(I-P^{\D,r})\left(\phi f_j-P^{\D_r^p}(\phi F_j)(.,0)\right)\\
&=(I-P^{\D,r})\left(\phi f_j\right)-(I-P^{\D,r})\left(h_j\right)\\
&=H^{\D,r}_{\phi}f_j.
\end{align*}
Therefore, the sequence $\{H^{\D,r}_{\phi}f_j\}$ is convergent in $L^2(\D, (-\rho)^r)$. 
\end{proof}

\begin{lemma}\label{LemComp}
	Let $r$ be a nonnegative real number and $\D$ be a $C^2$-smooth 
	bounded pseudoconvex domain in $\C^n$ with a defining function $\rho$. 
	Assume that $\phi\in C(\Dc)$ such that $\phi(z)=0$ if $z$ is a strongly 
	pseudoconvex point in $b\D$. Then $T^r_{\phi}$ is compact about 
	strongly pseudoconvex points  on $A^2(\D, (-\rho)^r)$.
\end{lemma}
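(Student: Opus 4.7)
The plan is to reduce Lemma~\ref{LemComp} to the pointwise estimate $\|\phi f_j\|_{L^2(\D,(-\rho)^r)}\to 0$ for every sequence $\{f_j\}$ converging to $0$ weakly about strongly pseudoconvex points; since $T^r_\phi=P^rM_\phi$ and $P^r$ is a contraction, this immediately gives $\|T^r_\phi f_j\|\to 0$ and hence the conclusion. Thus the inflation machinery developed in Proposition~\ref{PropKernel}--Lemma~\ref{LemSlice} is not needed here, because a Toeplitz symbol enjoys the direct operator-norm bound $\|T^r_\phi\|\le\|\phi\|_\infty$ that the vanishing hypothesis on $\phi$ then leverages.

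The hypothesis provides two regions on which $\|\phi f_j\|$ is small for complementary reasons. First, the definition of weak convergence about strongly pseudoconvex points supplies an open neighborhood $U$ of the weakly pseudoconvex boundary $\Gamma_\D$ on which $\|f_j\|_{L^2(U\cap\D,(-\rho)^r)}\to 0$. Secondly, on any compact subset $K\subset\D$ the reproducing kernel $K^r(\cdot,\cdot)$ is bounded, so $|f_j(z)|\le K^r(z,z)^{1/2}\|f_j\|$ gives a uniform $L^\infty$ bound on $K$; combined with the pointwise convergence $f_j(z)=\langle f_j,K^r_z\rangle_r\to 0$ and dominated convergence, this yields $\|f_j\|_{L^2(K,(-\rho)^r)}\to 0$.

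The missing region is a neighborhood of $b\D\setminus U$, which is a compact subset of the strongly pseudoconvex part of $b\D$ on which $\phi$ vanishes identically. Given $\ep>0$, continuity of $\phi$ together with this vanishing produces an open set $W\subset\Dc$ containing $b\D\setminus U$ with $|\phi|<\ep$ throughout $W$. Choosing $W$ as a sufficiently thin tubular neighborhood of $\overline{b\D\setminus U}$ guarantees that $K:=\Dc\setminus(U\cup W)$ is a compact subset of $\D$; verifying this is the one point of geometric care. Integrating $|\phi|^2|f_j|^2(-\rho)^r$ piecewise over the three sets $U\cap\D$, $W\cap\D$, and $K$, and sending $j\to\infty$, gives
\begin{align*}
\limsup_{j\to\infty}\|\phi f_j\|_{L^2(\D,(-\rho)^r)}^2\;\le\; \ep^2\sup_j\|f_j\|^2,
\end{align*}
since the contributions from $U\cap\D$ and from $K$ vanish in the limit by the two observations above. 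Letting $\ep\to 0$ delivers $\|\phi f_j\|_{L^2(\D,(-\rho)^r)}\to 0$ and hence $\|T^r_\phi f_j\|\to 0$.

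The main obstacle is conceptual rather than computational: one must resist the temptation to invoke the inflation apparatus (which will be genuinely needed for the Hankel statement \emph{Proposition \ref{PropComp}}, where no symbol-vanishing shortcut is available) and instead recognize that the symbol hypothesis here collapses the problem to a straightforward three-piece decomposition of $\D$, controlled respectively by the hypothesis on the sequence near $\Gamma_\D$, by the smallness of $\phi$ near the strongly pseudoconvex boundary, and by interior estimates on the remaining compactum.
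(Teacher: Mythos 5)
Your proof is correct and takes essentially the same approach as the paper: the paper's argument likewise uses weak convergence to get boundedness and local uniform convergence to zero (via Montel), the hypothesized $L^2$ decay on the neighborhood $U$ of $\Gamma_\D$, and the vanishing of $\phi$ on the strongly pseudoconvex boundary to conclude $\phi f_j\to 0$ and hence $T^r_\phi f_j = P^r(\phi f_j)\to 0$. The only difference is that you have explicitly written out the three-region decomposition ($U\cap\D$, the tubular neighborhood $W$, and the interior compactum $K$) which the paper leaves to the reader as ``one can show that $\phi f_j\to 0$.''
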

\begin{proof}
	Let $\{f_j\}$ be a sequence in $A^2(\D, (-\rho)^r)$ that (without loss of generality) 
	converges to 0 weakly 
	about strongly pseudoconvex points.  Then $f_j\to 0$ weakly as $j\to \infty$ and 
	there is a neighborhood $U$ of weakly pseudoconvex points in $b\D$ such that 
	\[\|f_j\|_{L^2(U\cap \D,(-\rho)^r)}\to 0 \text{ as } j\to \infty.\] 
	Using the uniform boundedness principle and the fact that  $f_j\to 0$ 
	weakly we conclude that  the sequence  $\{f_j\}$ is bounded in 
	$A^2(\D, (-\rho)^r)$. Furthermore,  Cauchy estimates together with 
	Montel's Theorem (and the fact that $f_j\to 0$ weakly)  imply that $\{f_j\}$ 
	converges to zero uniformly on compact subsets of $\D$.  
	Using the fact that $\phi=0$ on strongly pseudoconvex points, one can show that 
	$\phi f_j\to 0$ in $A^2(\D, (-\rho)^r)$. Therefore, $T^r_{\phi}f_j\to 0$ in  
	$A^2(\D, (-\rho)^r)$. That is, $T^r_{\phi}$ is compact about strongly pseudoconvex 
	points on in $A^2(\D, (-\rho)^r)$. 
\end{proof}

Let $\D$ be a domain in $\C^n$. Then $z\in b\D$ is said to have a holomorphic 
(plurisubharmonic)  peak function if there exists a holomorphic (plurisubharmonic) 
$f$ that is continuous on $\Dc$  such that $f(z)=1$ and $|f(w)|<1$ ($f(w)<1$) 
for $w\in \Dc\setminus \{z\}$. 

Next we show that any Hankel operator with a symbol continuous on the closure 
of the domain is compact about strongly pseudoconvex points. 

\begin{proposition}\label{PropComp}
 Let $r$ be a nonnegative real number, $\D$ be a $C^2$-smooth bounded 
pseudoconvex domain in $\C^n$ with a defining function $\rho$, and $\phi\in 
C(\Dc)$. Then $H^r_{\phi}:A^2(\D, (-\rho)^r)\to L^2(\D, (-\rho)^r)$ is compact 
about strongly pseudoconvex points. 
\end{proposition}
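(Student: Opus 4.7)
The plan is to split $\phi$ with a smooth cutoff around the weakly pseudoconvex set $\Gamma_\D$: the piece supported near $\Gamma_\D$ will be controlled directly by the $L^2$-smallness of $\{f_j\}$ there built into weak convergence about strongly pseudoconvex points, and the remaining piece — with symbol vanishing near $\Gamma_\D$ — will be handled by lifting to the inflation $\D_r^p$ and exploiting the local pseudoconvexity/B-regularity of $\D_r^p$ away from the lift of $\Gamma_\D$ supplied by Lemma \ref{LemInflation} and Corollary \ref{CorInflation}.

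Concretely, I would take $\{f_j\} \subset A^2(\D,(-\rho)^r)$ converging to $0$ weakly about strongly pseudoconvex points, so that $\{f_j\}$ is bounded and there is a neighborhood $U$ of $\Gamma_\D$ with $\|f_j\|_{L^2(U \cap \D,(-\rho)^r)} \to 0$ (vacuously if $\Gamma_\D = \emptyset$). Pick $\chi \in C^\infty_c(\C^n)$ with $0 \le \chi \le 1$, $\chi \equiv 1$ on a smaller neighborhood $V$ of $\Gamma_\D$, and $\operatorname{supp}\chi \subset U$, and write $\phi = \chi\phi + (1-\chi)\phi$. Since $H^r_\bullet$ has operator norm at most the supremum norm of its symbol,
\[\|H^r_{\chi\phi} f_j\|_{L^2(\D,(-\rho)^r)} \le \|\phi\|_\infty\,\|f_j\|_{L^2(U\cap\D,(-\rho)^r)} \longrightarrow 0,\]
so it suffices to prove the claim for $\psi := (1-\chi)\phi \in C(\Dc)$, which vanishes on $V \cap \Dc$.

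To finish, I would lift to the inflation. Let $\Psi(z,w) = \psi(z)$ and $F_j(z,w) = f_j(z)$ be the trivial extensions. By Lemma \ref{LemSlice}, it is enough to show $\{H^{\D_r^p}_\Psi F_j\}$ converges in $L^2(\D_r^p)$; the limit is then forced to be $0$ by a subsequence argument together with $F_j \to 0$ weakly in $A^2(\D_r^p)$, which in turn follows from fiberwise rotation invariance (the pairing $\langle F_j, G\rangle_{\D_r^p}$ collapses to a multiple of $\langle f_j, G(\cdot,0)\rangle_{\D,(-\rho)^r}$, with $G(\cdot,0) \in A^2(\D,(-\rho)^r)$ by the sub-mean-value inequality already used in the proof of Lemma \ref{LemSlice}). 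Now $\Psi \in C(\overline{\D_r^p})$ vanishes on an open $\overline{\D_r^p}$-neighborhood of $\{(z,w) \in \overline{\D_r^p}: z \in V\}$, so every point of $\operatorname{supp}\Psi \cap b\D_r^p$ has $z$-coordinate in the strongly pseudoconvex locus of $b\D$, bounded away from $\Gamma_\D$. By Lemma \ref{LemInflation} every such point with all $w_k \neq 0$ is strongly pseudoconvex in $b\D_r^p$, and by Corollary \ref{CorInflation} around every $(z_0,0)$ with $z_0$ strongly pseudoconvex in $b\D$ the domain $\D_r^p$ is locally pseudoconvex. Building plurisubharmonic peak functions of the form $u(z,w) = u_1(z) - c|w-w_0|^2$ from a local peak function $u_1$ at the strongly pseudoconvex point $z_0$ and the decoupled structure of $\widetilde\rho$, one obtains B-regularity of these local pieces; a partition of unity then assembles the local compactness into compactness of $H^{\D_r^p}_\Psi$ (at least on the subspace of trivial lifts), giving $H^{\D_r^p}_\Psi F_j \to 0$.

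The main technical obstacle is precisely this last step. When $p > r$ the function $|w_k|^{2p/r}$ has vanishing complex Hessian at $w_k = 0$, so the lifted points $(z_0,0) \in b\D_r^p$ are weakly pseudoconvex in $b\D_r^p$ even when $z_0$ is strongly pseudoconvex in $b\D$. Confirming B-regularity — or an equivalent compactness criterion for $\dbar$ — at these $w = 0$ points and then piecing the local compactness together into a global statement for $H^{\D_r^p}_\Psi$ is where the bulk of the technical work will reside; this is precisely the role played by the B-regularity of the inflated domain in replacing the global assumption on compactness of the $\dbar$-Neumann operator used in the earlier work of \v{C}u\v{c}kovi\'c and \c{S}ahuto\u{g}lu.
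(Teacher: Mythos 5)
Your overall strategy — inflate to $\D^p_r$, exploit local Property (P), and descend via Lemma \ref{LemSlice} — is in the right spirit, and the preliminary splitting $\phi = \chi\phi + (1-\chi)\phi$ together with the bound $\|H^r_{\chi\phi}f_j\| \leq \|\phi\|_\infty\,\|f_j\|_{L^2(U\cap\D,(-\rho)^r)} \to 0$ is a clean reduction that is not in the paper. However, the proposal has two genuine gaps, both of which you partly acknowledge.

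First, the peak function you sketch for the $w$-degenerate locus, $u(z,w) = u_1(z) - c|w-w_0|^2$, is not plurisubharmonic: the term $-c|w-w_0|^2$ has strictly negative complex Hessian in the $w$-variables, while $u_1(z)$ contributes nothing there, so the Levi form of $u$ is negative definite in the $w$-directions. This construction fails outright. The paper does not construct explicit peak functions at the $w=0$ points; instead, it stratifies the degenerate locus $Y$ by the number of vanishing $w_k$'s, writes each stratum as a countable increasing union whose pieces are either contained in the lift of $X_0 = b\D\cap\overline{B(z_0,\ep)}$ (where holomorphic peak functions of the strongly pseudoconvex base extend trivially to plurisubharmonic peak functions on $\D^p_r$) or are strongly pseudoconvex when viewed inside the lower-dimensional slice $\C^n\times\C^l$ by Lemma \ref{LemInflation}, and then invokes Sibony's theorem \cite[Proposition 1.9]{Sibony87} that a countable union of B-regular compacta is B-regular. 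That union-of-B-regular-sets machinery is what your sketch is missing, and it cannot be replaced by the ad hoc peak function you propose.

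Second, the final assembly — ``a partition of unity then assembles the local compactness into compactness of $H^{\D^p_r}_\Psi$'' — asserts more than is available. The inflated domain $\D^p_r$ does \emph{not} in general satisfy Property (P): above $\Gamma_\D$ the boundary is genuinely bad, and $\Psi$ vanishing near that fiber does not by itself force $H^{\D^p_r}_\Psi$ to be compact on all of $A^2(\D^p_r)$. What the paper actually does is establish Property (P) only for the \emph{local} pieces $\D^p_r\cap B((z_0,0),\ep)$, descend each such local compactness statement to $\D$ via Lemma \ref{LemSlice} (getting compactness of $H^{\D\cap B(p_k,\ep_k),r}_\phi$), and then patch the local solution operators \emph{on $\D$ itself} using a smooth partition of unity together with a correction by a global H\"ormander $L^2$-solution operator $Z^r$ on $\D$. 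The descend-then-patch order is essential: the patching step requires a global solution operator for $\dbar$, which is available on the pseudoconvex domain $\D$ with the weight $(-\rho)^r$, whereas patching upstairs on $\D^p_r$ as you suggest has no clear implementation, precisely because $\D^p_r$ lacks a global compactness statement when $\Gamma_\D\neq\emptyset$.
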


\begin{proof}
We will prove more (see Corollary \ref{Cor2} below). First of all, for any 
$\phi\in C(\Dc)$ there exists $\{\phi_j\}\subset C^1(\Dc)$ such that 
$\phi_j\to \phi$ uniformly on $\Dc$ as $j\to \infty$. Furthermore, 
$\{H_{\phi_j}^{r}\}$ converges to $H_{\phi}^{r}$ in the operator norm 
and, by Proposition   \ref{PropQuasiCompClosed}, if $H_{\phi_j}^{r}$ 
is compact about strongly pseudoconvex point for every $j$ then so is $H_{\phi}^{r}$. 
Therefore, for the rest of the proof we will assume that $\phi\in C^1(\Dc)$.
Secondly, the proof for $r=0$ does not require the inflation argument 
	in the next paragraph and hence it is easier than the case $r>0$. Since both 
	proofs are similar, except for the inflation argument, in the rest of the proof, 
	we will assume that $r>0$.
 
Let $z_0\in b\D$ be a strongly pseudoconvex point. Then, by Corollary \ref{CorInflation}, 
the domain $B((z_0,0),\ep)\cap \D^p_r$ is pseudoconvex for small $\ep$. Let $\ep>0$ 
be such that $X_0=b\D\cap  \overline{B(z_0,\ep)}\subset \C^n$ consists of strongly 
pseudoconvex points. Let us define  
\[Y=b\D_r^p\cap  \overline{B(z_0,\ep)} 
\cap  \{(z,w)\in \C^n\times \C^p: w_k=0 \text{ for some } 1\leq k\leq p\}\]
\[X_j=b\D_r^p\cap  \overline{B(z_0,\ep)} 
	\cap  \{(z,w)\in \C^n\times \C^p:|w_k|\geq 1/j \text{ for all } 1\leq k\leq p\}\]
 for  $j=1,2,3,\ldots$. Then $X_0$ is B-regular as any point in $X_0$ has a 
holomorphic  (hence plurisubharmonic) peak function on $\D\subset \C^n$. 
The same function (by extending it trivially) is also a plurisubharmonic peak 
function on $\D_r^p\subset \C^{n+p}$. Hence, $X_0$ is B-regular as a compact 
set in $\C^{n+p}$. Furthermore, Lemma \ref{LemInflation} implies that we can 
shrink $\ep$, if necessary, so that $X_j$'s  are composed of strongly pseudoconvex 
points for $j\geq 1$. Hence, $X_j$ is B-regular for every $j=0,1,2,\ldots$.  

Next we will apply a similar idea to $Y$ in lower dimensions. 
Let us define $Y_1=\cup_{m=1}^pY_1^m$ where 
\[Y^m_1=b\D_r^p\cap  \overline{B(z_0,\ep)} 
\cap  \{(z,w)\in \C^n\times \C^p: w_k=0 \text{ for } k\neq m\}.\]
We can write $Y^m_1$ as the union of $X_0$ together with the compact sets  
\[b\D_r^p\cap  \overline{B(z_0,\ep)} 
\cap  \{(z,w)\in \C^n\times \C^p:|w_m|\geq 1/j, w_k=0 \text{ for } k\neq m\}\]
for $j=1,2,3,\ldots$. However, we can think of the sets above as subsets in 
$\C^n\times \C$ and (by Lemma \ref{LemInflation}) they are composed of 
strongly pseudoconvex points. Hence, they are B-regular. Then 
\cite[Proposition 1.9]{Sibony87} implies that each $Y^m_1$ is B-regular 
as it is a countable union of B-regular sets. Hence, applying Sibony's 
proposition again, we conclude that $Y_1$ is compact. Similarly, we can 
define $Y_2\subset Y$ as a countable union of compact sets where all 
but at most two $w_k$'s are equal to 0. Using the same reasoning above 
adopted for $Y_2$ we can conclude that $Y_2$ is B-regular. In a similar 
fashion, we can define $Y_l$ for $1\leq l\leq p-1$ and prove that all of 
them are B-regular. Hence $Y=(\cup_{l=1}^pY_l)\cup X_0$ is B-regular. Then 
\[b(\D_r^p\cap B(z_0,\ep))\subset Y\cup (\cup_{j=0}^{\infty}X_j)\cup bB(z_0,\ep)\]  
is B-regular (satisfies Property $(P)$ in Catlin's terminology) 
and, hence, the $\dbar$-Neumann operator on  $\D_r^p\cap B(z_0,\ep)$ is  
compact (see \cite[Theorem 4.8]{StraubeBook} and \cite{Catlin84}).  
Then $H_{\phi}^{\D_r^p\cap B(z_0,\ep)}$ is compact 
(see \cite[Proposition 4.1]{StraubeBook}) and  Lemma \ref{LemSlice} 
implies that $H_{\phi}^{\D\cap B(z_0,\ep),r}$ is compact.

Next we will use local compact solution operators to show that $H^r_{\phi}$ is 
compact about strongly pseudoconvex points. Let 
$\{f_j\}\subset A^2(\D,(-\rho)^r)$ be a sequence weakly convergent about 
strongly pseudoconvex points. Then there exists an open neighborhood $U$ of the 
set of weakly pseudoconvex points in $b\D$ such that
\begin{itemize}
 \item[i.] $\{f_j\}$ is weakly convergent,
 \item[ii.] $\|f_j-f_k\|_{L^2(U\cap \D,(-\rho)^r)}\to 0$ as $j,k\to \infty$. 
\end{itemize}
Let us choose $\{p_k:k=1,\ldots,m\}\subset b\D\setminus U$ and 
$\ep_k>0$ (for  $k=1,\ldots,m$) such that 
\begin{itemize}
 \item[i.] $b\D\setminus U\subset \cup_{k=1}^mB(p_k,\ep_k)$
 \item[ii.] $H_{\phi}^{k,r}=H_{\phi}^{B(p_k,\ep_k)\cap \D,r}$ is 
compact on $A^2(B(p_k,\ep_k)\cap \D,(-\rho)^r)$ for $k=1,\ldots,m$. 
\end{itemize}
Let us choose a strongly pseudoconvex domain $\D_{-1}\Subset \D$ 
and smooth cut-off functions $\chi_{-1}\in C^{\infty}_0(\D_{-1}), \chi_0\in 
C^{\infty}_0(U), $ and $\chi_k\in C^{\infty}_0(B(p_k,\ep))$ for $k=1,\ldots,m$ 
such that $\sum_{k=-1}^m\chi_k\equiv 1$ on $\Dc$.

Let $H_{\phi}^{-1,r}= H_{\phi}^{\D_{-1},r}, H_{\phi}^{0,r}=H_{\phi}^{U\cap \D,r},$ 
and $g_j=\sum_{k=-1}^m\chi_kH_{\phi}^{k,r}f_j$. We note that 
$H_{\phi}^{-1,r}$ is compact as $\D_{-1}\Subset \D$ is strongly pseudoconvex 
(and $\rho<0$ on the closure of $\D_{-1}$); 
$\{H_{\phi}^{0,r}f_j\}$ is convergent as $\{f_j\}$ is convergent in
$L^2(U\cap \D,(-\rho)^r)$; and by the previous part  of 
this proof, $H_{\phi}^{k,r}$ is compact for each $k=1,\ldots,m$. 
Therefore, $\{g_j\}$ is convergent in $L^2(\D,(-\rho)^r)$.  
Furthermore, 
\[\dbar g_j= f_j\dbar \phi
+\sum_{k=-1}^m \left(\dbar\chi_k\right)H_{\phi}^{k,r}f_j.\]
Then $\left\{\sum_{k=-1}^m \left(\dbar\chi_k\right)H_{\phi}^{k,r}f_j\right\}$ 
is a convergent sequence of $\dbar$-closed $(0,1)$-forms as both $\dbar g_j$ 
and $f_j\dbar \phi$ are $\dbar$-closed. Let 
$Z^r:L^2_{(0,1)}(\D,(-\rho)^r)\to L^2(\D,(-\rho)^r) $ be  a bounded linear  
solution operator to $\dbar$ (see \cite{Hormander65}). Let 
\[h_j=g_j-Z^r  \sum_{k=-1}^m \left(\dbar\chi_k\right)H_{\phi}^{k,r}f_j.\]
Then $\{h_j\}$ is convergent and $\dbar h_j=f_j\dbar\phi$. So by 
taking projection on the orthogonal complement of $A^2(\D,(-\rho)^r)$ 
we get  $(I-P^{r})h_j=H_{\phi}^rf_j$. Therefore, $\{H_{\phi}^rf_j\}$ is 
convergent.  
\end{proof}

 Using the proof of the proposition above we get the following corollary. 
\begin{corollary}\label{Cor2}
 Let $r$ be a nonnegative real number and $\D$ be a $C^2$-smooth 
 bounded pseudoconvex domain in $\C^n$ with a defining 
 function $\rho$.  Assume that $\D$ satisfies 
 property (P) of Catlin  (or B-regularity of Sibony). Then 
 \begin{itemize}
 	\item[i.] $\dbar$ has a compact 
 	solution operator on $K^2_{(0,1)}(\D, (-\rho)^r)$, the weighted $\dbar$-closed 
 	$(0,1)$-forms, 
 	  \item[ii.]  $H^r_{\phi}:A^2(\D, (-\rho)^r)\to L^2(\D, (-\rho)^r)$ is 
 	  compact for all $\phi\in C(\Dc)$. 
 \end{itemize}
\end{corollary}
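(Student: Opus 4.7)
The plan is to promote the inflation strategy of Proposition \ref{PropComp} from its local version (near a single strongly pseudoconvex boundary point) to a global one. The central reduction is to show that the inflated domain $\D_r^p$ inherits Sibony B-regularity from $\D$. Once this is established, \cite{Catlin84} or \cite[Theorem 4.8]{StraubeBook} gives compactness of the $\dbar$-Neumann operator on $\D_r^p$; equivalently, $\dbar$ admits a compact canonical solution operator on $L^2_{(0,1)}(\D_r^p)$, and by \cite[Proposition 4.1]{StraubeBook} the Hankel operator $H^{\D_r^p}_{\Phi}$ is compact for every $\Phi\in C(\overline{\D_r^p})$.

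For the B-regularity of $\D_r^p$ I would apply the stratification of $b\D_r^p$ used in the proof of Proposition \ref{PropComp}, now applied globally rather than locally. The base stratum $\{(z,0): z\in b\D\}$ is B-regular by hypothesis, since plurisubharmonic peak functions on $\Dc$ extend trivially to $\overline{\D_r^p}$. The top Reinhardt stratum $\{(z,w)\in b\D_r^p : |w_k|\geq 1/j \text{ for all } k\}$ is handled via the Levi decomposition $H_{\widetilde{\rho}}=H_{\rho}+H_{\lambda}$ of Lemma \ref{LemInflation} together with the strict plurisubharmonicity of $\lambda(w)=|w_1|^{2p/r}+\cdots+|w_p|^{2p/r}$ in the directions where the $w_k$ are bounded below. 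The lower strata, where some coordinates $w_k$ vanish, are exhausted as countable unions of such compacta and assembled using \cite[Proposition 1.9]{Sibony87}.

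Granted B-regularity of $\D_r^p$, part (ii) follows by the slicing principle of Lemma \ref{LemSlice}: for $\phi\in C(\Dc)$ and a bounded sequence $\{f_j\}\subset A^2(\D,(-\rho)^r)$, extend $\phi$ and $f_j$ trivially to $\overline{\D_r^p}$ and $A^2(\D_r^p)$ respectively, extract a convergent subsequence of $\{H^{\D_r^p}_{\phi}F_j\}$ in $L^2(\D_r^p)$ using the compactness of $H^{\D_r^p}_\phi$, and conclude by Lemma \ref{LemSlice} that the corresponding subsequence of $\{H^{\D,r}_{\phi}f_j\}$ converges in $L^2(\D,(-\rho)^r)$. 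For part (i), the same slicing principle at the level of forms produces the compact solution operator: a $\dbar$-closed $u\in L^2_{(0,1)}(\D,(-\rho)^r)$ lifts trivially to a $\dbar$-closed $U\in L^2_{(0,1)}(\D_r^p)$ satisfying $\|U\|^2_{L^2_{(0,1)}(\D_r^p)}=c_{p,r}\|u\|^2_{L^2_{(0,1)}(\D,(-\rho)^r)}$ by \eqref{EqnDilation}; the canonical solution $F$ of $\dbar F=U$ on $\D_r^p$ is holomorphic in $w$ (since $U$ has no $d\overline{w}_k$-components), so $|F|^2$ is subharmonic in $w$, and the resulting mean-value estimate gives $f(z):=F(z,0)\in L^2(\D,(-\rho)^r)$ with $\|f\|_{L^2(\D,(-\rho)^r)}^2\leq c_{p,r}^{-1}\|F\|^2_{L^2(\D_r^p)}$. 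A direct calculation yields $\dbar f=u$, and compactness of $u\mapsto f$ is inherited from that of the canonical solution operator on $\D_r^p$.

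The main obstacle is the first step. The delicate point is at Reinhardt boundary points $(z,w)\in b\D_r^p$ with $z$ in the interior of $\D$ and $w\neq 0$: strong pseudoconvexity of $\D$ is unavailable, so Lemma \ref{LemInflation} does not apply directly, and the Hessian of $\lambda$ degenerates in the directions where some $w_k$ vanish. The stratification together with Sibony's countable-union proposition is the mechanism designed to handle these degenerate strata, but the bookkeeping is the genuine technical content of the proof.
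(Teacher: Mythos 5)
Your overall plan---inflate, prove B-regularity of the inflated domain, then slice---is the same as the paper's, but the specific inflated domain you use makes the argument fail at its very first step. You inflate the defining function $\rho$ itself and work with $\D^p_r$ as in \eqref{EqnDomain}. However, $\D^p_r$ need not be pseudoconvex. At a Reinhardt boundary point $Q=(z,w)\in b\D^p_r$ with $z$ in the \emph{interior} of $\D$, a complex tangent vector $X=(X_z,0)$ exists whenever $X_z$ is complex tangent to the level set $\{\rho=\rho(z)\}$, and for such $X$ the decoupled Levi form of Lemma~\ref{LemInflation} reduces to $H_{\widetilde\rho}(Q;X)=H_\rho(z;X_z)$, with no contribution from $\lambda$. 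Since $\rho$ is merely a $C^2$ defining function and need not be plurisubharmonic in the interior of $\D$, the quantity $H_\rho(z;X_z)$ can be strictly negative. Thus $\D^p_r$ is in general not even pseudoconvex, and neither the $\dbar$-Neumann operator on $\D^p_r$ nor the B-regularity framework you invoke is available. You do flag an obstruction at exactly these points, but you misattribute it to degeneracy of $H_\lambda$; the genuine problem is the possible negativity of $H_\rho$ over interior $z$, which no stratification or invocation of \cite[Proposition 1.9]{Sibony87} can repair. (This difficulty does not arise in Proposition~\ref{PropComp}, because there the inflation is only used locally near a strongly pseudoconvex boundary point, where Corollary~\ref{CorInflation} guarantees that $B((z_0,0),\ep)\cap\D^p_r$ is pseudoconvex.)

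The paper's fix is the Diederich--Forn{\ae}ss theorem. Choose $\rho_1$ and $0<\eta\leq 1$ so that $-(-\rho_1)^\eta$ is a bounded strictly plurisubharmonic exhaustion function for $\D$, set $s=r/\eta$, pick an integer $q\geq s$, and inflate the \emph{plurisubharmonic} function $-(-\rho_1)^\eta$ rather than $\rho$, i.e.
\[
\D^q_s=\left\{(z,w)\in\C^n\times\C^q:\ z\in\D,\ -(-\rho_1(z))^\eta+|w_1|^{2q/s}+\cdots+|w_q|^{2q/s}<0\right\}.
\]
The global defining function $-(-\rho_1)^\eta+\lambda$ is now a sum of plurisubharmonic functions, so $\D^q_s$ is pseudoconvex, and your stratification plus Sibony's countable-union proposition then does establish Property (P) for $\D^q_s$ exactly as in the first half of the proof of Proposition~\ref{PropComp}. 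The exponent choice $s=r/\eta$ is what makes the Forelli--Rudin slice recover the correct weight, $(-\rho_1)^{\eta s}=(-\rho_1)^r$, comparable to $(-\rho)^r$. With this in place, the sliced operator $R\,\dbar^*N^{\D^q_s}E$ (with $E$ trivial extension and $R$ restriction to $w=0$) is the desired compact $\dbar$-solution operator on $K^2_{(0,1)}(\D,(-\rho)^r)$, giving part~i. The paper then deduces ii directly from i, since a compact solution operator for $\dbar$ immediately yields compactness of $H^r_\phi$ for $\phi\in C(\Dc)$; your independent sketch of ii via Lemma~\ref{LemSlice} is valid but no longer necessary once i is proved.
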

\begin{proof}
	Since ii. follows from i. we will only prove i. By a theorem Diederich and 
	Forn\ae{}ss \cite{DiederichFornaess77} there exists a $C^2$-smooth 
	defining function $\rho_1$ and $0<\eta\leq 1$ such that $-(-\rho_1)^\eta$ 
	is a strictly plurisubharmonic exhaustion function for $\D$. 
	Since $\rho_1$ and $\rho$ are comparable on $\Dc$ it is enough to 
	prove that $\dbar$ has a compact solution operator on 
	$K^2_{(0,1)}(\D, (-\rho_1)^r)$. 
	
	Let $s=r/\eta\geq 0$ and $q$ be an integer such that $s\leq q$. We define 
\[\D^q_s=\left\{(z,w)\in\C^n\times \C^q: -(-\rho_1(z))^\eta+\lambda(w)<0\right\}\] 
	where $\lambda(w)=|w_1|^{2q/s}+\cdots +|w_q|^{2q/s}$. Then 
	$-(-\rho_1)^\eta+\lambda$ is a bounded $C^2$-smooth plurisubharmonic 
	 function and $\D^q_s$ is pseudoconvex.  Furthermore,  the first part of the 
	 proof of Proposition \ref{PropComp} shows that $\D^q_s$ satisfies property (P). 
	 	
	Let $\{f_j\}$ be a bounded sequence in  $K^2_{(0,1)}(\D, (-\rho_1)^r)$. 
	Then $\{F_j\}$ is a bounded sequence in  $K^2_{(0,1)}(\D^q_s)$. 
	As shown in the first part of this proof, $\D_s^q$ is a bounded 
	(not necessarily $C^2$-smooth) pseudoconvex domain with 
	property (P). Then $\{\dbar^*N^{\D_s^q}F_j\}$ has a convergent 
	subsequence in $L^2(\D_s^q)$ where $N^{\D_s^q}$ is the 
	$\dbar$-Neumann operator on $L^2_{(0,1)}(\D_s^q)$. 
	By the proof of Proposition \ref{PropKernel} and the fact that 
 $\dbar^*N^{\D_s^q}F_j$ is holomorphic in $w$, we conclude that  
	$\dbar^*N^{\D_s^q}F_j(.,0) \in L^2(\D, (-\rho_1)^r)$.  
	Furthermore, $\dbar \dbar^*N^{\D_s^q}F_j(.,0)=f_j$ for all $j$ 
	and $\{\dbar^*N^{\D_s^q}F_j(.,0)\}$ has a convergent 
	subsequence in $L^2(\D, (-\rho_1)^r)$.  Therefore,  $\dbar$ has a compact 
	solution operator $R \dbar^*N^{\D_s^q}E$ on $K^2_{(0,1)}(\D, (-\rho_1)^r)$ 
	where $E$ is the trivial extension operator and $R$ is the restriction 
	from $\D_s^q$ onto $\D$. 	
\end{proof}

The following Lemma is essentially contained in the proof of 
\cite[Proposition 1.3]{ArazyEnglis01}. We present it here for 
the convenience of the reader.

\begin{lemma}\label{LemArazyEnglis}
Let $r$ be a nonnegative real number, $\D$ be bounded 
domain in $\C^n$,  and $\phi\in C(\Dc)$.
Assume that $z_0\in b\D$ has a holomorphic peak function. Then
\[\lim_{z\to z_0} B_rT^r_{\phi}(z)=\phi(z_0).\] 
\end{lemma}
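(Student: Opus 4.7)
The plan is to recognize $B_rT^r_\phi(z)$ as an integral of $\phi$ against the probability measure $d\mu_z := |k^r_z|^2(-\rho)^r dV$, and then to use the peak function to show that $\mu_z$ concentrates at $z_0$ as $z\to z_0$. Since $P^r$ is the orthogonal projection onto $A^2(\D,(-\rho)^r)$ and $k^r_z$ lies in that space, one gets
\[
B_rT^r_\phi(z)=\langle P^rM_\phi k^r_z,k^r_z\rangle_r
=\langle \phi k^r_z,k^r_z\rangle_r
=\int_\D \phi(\xi)\,|k^r_z(\xi)|^2(-\rho(\xi))^r\,dV(\xi),
\]
and the normalization $\|k^r_z\|_r=1$ says that $\mu_z$ is indeed a probability measure on $\D$.

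The main step is to prove that for every open neighborhood $U$ of $z_0$, $\mu_z(\D\setminus U)\to 0$ as $z\to z_0$. Let $f$ be a holomorphic peak function at $z_0$. For each positive integer $N$, the function $f^N k^r_z$ sits in $A^2(\D,(-\rho)^r)$ with $\|f^N k^r_z\|_r\le \|f\|_\infty^N=1$, and the reproducing property of $K^r$ gives
\[
\langle f^N k^r_z, k^r_z\rangle_r=f^N(z).
\]
Since $|f^N(z)|\to 1$ as $z\to z_0$, Cauchy--Schwarz forces $\|f^N k^r_z\|_r\to 1$, that is, $\int_\D |f|^{2N}\,d\mu_z\to 1$. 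On the compact set $\Dc\setminus U$ we have $|f|\le c<1$ for some $c$, so
\[
(1-c^{2N})\,\mu_z(\D\setminus U)\le \int_\D (1-|f|^{2N})\,d\mu_z\longrightarrow 0,
\]
yielding the desired concentration.

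To conclude, fix $\ep>0$ and choose $U$ with $|\phi(\xi)-\phi(z_0)|<\ep$ for $\xi\in U\cap\Dc$. Using $\mu_z(\D)=1$,
\[
|B_rT^r_\phi(z)-\phi(z_0)|
=\left|\int_\D(\phi(\xi)-\phi(z_0))\,d\mu_z(\xi)\right|
\le \ep+2\|\phi\|_\infty\,\mu_z(\D\setminus U),
\]
so $\limsup_{z\to z_0}|B_rT^r_\phi(z)-\phi(z_0)|\le \ep$, and letting $\ep\to 0$ completes the proof.

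The only delicate point is the concentration argument, which is entirely driven by the existence of the holomorphic peak function and the reproducing property of $k^r_z$; no explicit form for the kernel or the weight is needed. I do not expect any serious obstacle beyond being careful that $f^N k^r_z\in A^2(\D,(-\rho)^r)$ (immediate from $|f|\le 1$) and that the reproducing identity applies in the weighted setting (which it does since the kernel $K^r$ reproduces elements of $A^2(\D,(-\rho)^r)$).
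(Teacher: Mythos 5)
Your proof is correct and follows essentially the same route as the paper's: both interpret $B_rT^r_\phi(z)$ as integration against the probability measure $|k^r_z|^2(-\rho)^rdV$, both use the holomorphic peak function together with the reproducing property of $K^r$ to show this measure concentrates at $z_0$, and both then conclude via continuity of $\phi$. The only (cosmetic) difference is in how concentration is extracted: the paper takes a high power $g$ of the peak function with $|g|\le\ep$ off $U$ and estimates $\left|\int_\D g\,d\mu_z\right|=|g(z)|$ by a triangle inequality, while you observe $\int_\D|f|^{2N}\,d\mu_z=\|f^Nk^r_z\|_r^2$ is squeezed between $|f(z)|^{2N}$ and $1$; these are two phrasings of the same idea.
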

\begin{proof}
First, we prove that for any neighborhood $U$ of $z_0$ 
\begin{equation}\label{smallintegral}
\int_{\D\setminus U}\left|k_z^r(w)\right|^2\left(-\rho(w)\right)^rdV(w)\to 0 
\text { as } z\to z_0.
\end{equation}
Indeed, for given $U$ and $\ep>0$ first we choose a holomorphic peak function 
$g$ such that $|g(w)|\leq \ep$ for all $w\in \D\setminus U$. This can be simply 
done by taking a high enough power of the holomorphic peak function $g$. 
Then we choose $\delta>0$ such that if $|z-z_0|<\delta$ and $z\in \D$ then 
$|g(z)|>1-\ep$. In this case,
\begin{align*}
\int_U \left|k_z^r(w)\right|^2\left(-\rho(w)\right)^rdV(w)
\geq& \int_U |g(w)|\left|k_z^r(w)\right|^2\left(-\rho(w)\right)^rdV(w)\\
\geq&\left| \int_{\D} g(w)\left|k_z^r(w)\right|^2\left(-\rho(w)\right)^rdV(w) \right|\\
&-\left|\int_{\D\setminus U} g(w)\left|k_z^r(w)\right|^2\left(-\rho(w)\right)^rdV(w)\right|\\
\geq &|g(z)|-\int_{\D\setminus U} |g(w)|\left|k_z^r(w)\right|^2\left(-\rho(w)\right)^rdV(w)\\
\geq& 1-\ep-\ep\int_{\D\setminus U}
 \left|k_z^r(w)\right|^2\left(-\rho(w)\right)^rdV(w)\\
\geq& 1-2\ep
\end{align*}
whenever $|z-z_0|<\delta$. This implies that for a given neighborhood $U$ and 
$\ep>0$, there exists $\delta>0$ such that if $|z-z_0|<\delta$ then 
\[\int_{\D\setminus U}  \left|k_z^r(w)\right|^2\left(-\rho(w)\right)^rdV(w)\leq \ep.\] 
This gives \eqref{smallintegral}.

Now for $\ep>0$, we choose a neighborhood $U$ of $z$ such that 
$|\phi(w)-\phi(z_0)|\leq \ep$ for all $w\in U$. Then for this neighborhood 
$U$ and the same $\ep$ we choose $\delta>0$ such that if $|z-z_0|<\delta$ 
then $\int_{\D\setminus U} 	\left|k_z^r(w)\right|^2\left(-\rho(w)\right)^rdV(w)
\leq \frac{\ep}{1+2\|\phi\|_{L^\infty}}.$ In this case,
	\begin{align*}
		\left|B_rT^r_{\phi}(z)-\phi(z_0)\right|\leq &\int_{\D} 
		|\phi(w)-\phi(z_0)|\left|k_z^r(w)\right|^2\left(-\rho(w)\right)^rdV(w)\\
		=&\int_{U}|\phi(w)-\phi(z_0)|
		  \left|k_z^r(w)\right|^2\left(-\rho(w)\right)^rdV(w)\\
		&+\int_{\D\setminus U} |\phi(w)-\phi(z_0)|\left|k_z^r(w)\right|^2\left(-\rho(w)\right)^rdV(w)\\
		\leq& \ep \int_{U} 
		\left|k_z^r(w)\right|^2\left(-\rho(w)\right)^rdV(w) \\ 
		&+2\|\phi\|_{L^\infty}\int_{\D\setminus U} 
		\left|k_z^r(w)\right|^2\left(-\rho(w)\right)^rdV(w)\\
		\leq& \ep+ \ep=2\ep.
	\end{align*}
	This indeed concludes $\lim_{z\to z_0} B_rT^r_{\phi}(z)=\phi(z_0)$.
\end{proof}

We note that on any bounded domain, we have 
(see \cite[Lemma 1]{CuckovicSahutoglu14}) 
\[T_{\phi_2}^rT_{\phi_1}^r 
=T_{\phi_2\phi_1}^r-H_{\overline{\phi}_2}^{r*}H_{\phi_1}^r.\]
Using the fact above inductively one can prove the following lemma.

\begin{lemma}\label{LemConvert}
Let $r$ be a nonnegative real number and $\D$ be a $C^1$-smooth bounded 
domain in $\C^n$ with a defining function $\rho$. 
Supposed $\phi_1,\ldots,\phi_m\in L^{\infty}(\D)$. Then
	\begin{align*}
	T_{\phi_m}^rT_{\phi_{m-1}}^r\cdots T_{\phi_2}^rT_{\phi_1}^r 
	= &T_{\phi_m\phi_{m-1}\cdots\phi_2\phi_1}^r - 
	T_{\phi_m}^rT_{\phi_{m-1}}^r \cdots T_{\phi_3}^r 
	H_{\overline{\phi}_2}^{r*}H_{\phi_1}^r  \\
	&- T_{\phi_m}^rT_{\phi_{m-1}}^r \cdots T_{\phi_4}^r
	H_{\overline{\phi}_3}^{r*}H_{\phi_2\phi_1}^r  -\cdots 
	-H_{\overline{\phi}_m}^{r*}H_{\phi_{m-1}\cdots\phi_2\phi_1}^r \\
	=&T_{\phi_m\phi_{m-1}\cdots\phi_2\phi_1}^r+S^r
	\end{align*}
where $S^r$ is a finite sum of finite products of operators and each product 
starts  with a Hankel operator.
\end{lemma}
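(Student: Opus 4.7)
The plan is induction on $m$, using the two-factor identity
\[ T_{\phi_2}^r T_{\phi_1}^r = T_{\phi_2\phi_1}^r - H_{\overline{\phi}_2}^{r*} H_{\phi_1}^r \]
from \cite[Lemma 1]{CuckovicSahutoglu14}, which is displayed just before the statement, as the base case $m=2$. For the inductive step I would assume the claimed telescoped formula for $m-1$ factors, written compactly as
\[ T_{\phi_{m-1}}^r T_{\phi_{m-2}}^r \cdots T_{\phi_1}^r = T_{\phi_{m-1}\cdots\phi_1}^r + S_{m-1}^r, \]
where $S_{m-1}^r$ is a finite sum of finite products each of whose rightmost factor is a Hankel operator, and then multiply both sides on the left by $T_{\phi_m}^r$.

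To the resulting summand $T_{\phi_m}^r T_{\phi_{m-1}\cdots\phi_1}^r$ I would apply the two-factor identity once more, producing
\[ T_{\phi_m}^r T_{\phi_{m-1}\cdots\phi_1}^r = T_{\phi_m\phi_{m-1}\cdots\phi_1}^r - H_{\overline{\phi}_m}^{r*} H_{\phi_{m-1}\cdots\phi_1}^r, \]
which supplies the principal Toeplitz term $T_{\phi_m\phi_{m-1}\cdots\phi_1}^r$ in the lemma together with the final correction summand $-H_{\overline{\phi}_m}^{r*} H_{\phi_{m-1}\cdots\phi_1}^r$. The remaining contribution $T_{\phi_m}^r S_{m-1}^r$ distributes a factor of $T_{\phi_m}^r$ in front of each correction term produced at the previous step, so that the $k$-th summand of $S_{m-1}^r$ is transformed into $T_{\phi_m}^r T_{\phi_{m-1}}^r \cdots T_{\phi_{k+1}}^r H_{\overline{\phi}_k}^{r*} H_{\phi_{k-1}\cdots\phi_1}^r$ for $2 \le k \le m-1$. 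Collecting these together with the new $k=m$ summand recovers exactly the telescoped identity in the statement.

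For the final structural assertion about $S^r$, I would observe that in each summand just described, the rightmost operator, i.e.\ the one applied first to any input from $A^2(\D,(-\rho)^r)$, is the Hankel operator $H_{\phi_{k-1}\cdots\phi_1}^r$; this is the precise sense in which "each product starts with a Hankel operator." I do not expect a genuine obstacle in this lemma beyond careful bookkeeping: the main thing to manage is index shifting as the telescoped expansion grows by one term per inductive step, and verifying that the explicit list of summands and the abstract "Hankel first" description remain consistent at every stage.
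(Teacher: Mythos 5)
Your proposal is correct and carries out exactly the induction that the paper alludes to when it says the lemma follows ``inductively'' from the displayed two-factor identity $T_{\phi_2}^rT_{\phi_1}^r=T_{\phi_2\phi_1}^r-H_{\overline{\phi}_2}^{r*}H_{\phi_1}^r$; the paper offers no further details, and your bookkeeping of the telescoped correction terms matches its explicit formula. Your reading of ``starts with a Hankel operator'' as referring to the rightmost factor (the one applied first) is also the one the paper relies on later, when it uses $H^r_\psi k^r_z\to 0$ to conclude $B_rS^r(z)\to 0$.
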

Therefore, if the symbols  $\phi_1,\ldots,\phi_m$ are continuous on $\Dc$ we can write 
\begin{align}\label{EqnReduction}
T_{\phi}^r\cdots T_{\phi_m}^r=T_{\phi_1\cdots 
\phi_m}^r+S^r 
\end{align}
where $S^r$ is a finite sum of finite products of operators such that  
each product starts  with a Hankel operator with symbol continuous on $\Dc$.

We state the lemma below for general weights $\mu(z)$ (not only the ones of the 
form $(-\rho)^k$) that are nonnegative (can vanish on the boundary) and continuous 
on $\Omega$. The weights of this form are called admissible weights 
(see \cite{Pas90}) and the corresponding weighted Bergman projections 
and kernels are well defined. We say two weights $\mu_1$ and $\mu_2$ are 
comparable if there exists $c>0$ such that $c^{-1}\mu_1<\mu_2<c\mu_1$ on 
$\Omega$.

\begin{lemma} \label{LemEquiv}
	Let $\D$ be a domain in $\C^n$ and $\mu_1$ and $\mu_2$ be comparable 
      admissible weights. Let $k^{\mu_j}_z$ be the normalized Bergman kernel 
      corresponding to $\mu_j$ for $j=1,2$ and  $z_0\in b\D$. Then 
	$k^{\mu_1}_z\to 0$ weakly as $z\to z_0$ if and only if  
	$k^{\mu_2}_z\to 0$ weakly as $z\to z_0$.
\end{lemma}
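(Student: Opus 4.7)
The plan is to unpack weak convergence of the normalized Bergman kernels using the reproducing property and then use comparability of the two weights at two places: on the function spaces themselves and on the on-diagonal kernels.

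First I would observe that since $\mu_1$ and $\mu_2$ are comparable, the Hilbert spaces $A^2(\Omega,\mu_1)$ and $A^2(\Omega,\mu_2)$ consist of exactly the same holomorphic functions, and the two norms are equivalent:
\[
c^{-1}\|f\|_{\mu_1}^2\leq \|f\|_{\mu_2}^2\leq c\|f\|_{\mu_1}^2
\]
for every holomorphic $f$. Next, using the extremal characterization
\[
K^{\mu_j}(z,z)=\sup\bigl\{|f(z)|^2:\|f\|_{\mu_j}\leq 1\bigr\},
\]
the norm comparison immediately yields a comparability of the on-diagonal kernels: there exists $C>0$ such that
\[
C^{-1}K^{\mu_2}(z,z)\leq K^{\mu_1}(z,z)\leq C\,K^{\mu_2}(z,z),\qquad z\in\Omega.
\]

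Then I would rewrite weak convergence to $0$ of $\{k_z^{\mu_j}\}$ concretely. By the reproducing property, for any $f\in A^2(\Omega,\mu_j)$,
\[
\langle f,k_z^{\mu_j}\rangle_{\mu_j}
=\frac{\langle f,K^{\mu_j}(\cdot,z)\rangle_{\mu_j}}{\sqrt{K^{\mu_j}(z,z)}}
=\frac{f(z)}{\sqrt{K^{\mu_j}(z,z)}},
\]
so $k_z^{\mu_j}\to 0$ weakly as $z\to z_0$ is equivalent to
\[
\frac{f(z)}{\sqrt{K^{\mu_j}(z,z)}}\longrightarrow 0\quad\text{as }z\to z_0,\quad\text{for every }f\in A^2(\Omega,\mu_j).
\]

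Finally, I would combine the two previous reductions. Since the function classes $A^2(\Omega,\mu_1)$ and $A^2(\Omega,\mu_2)$ coincide as sets, the family of test functions $f$ is the same in both characterizations; and since $\sqrt{K^{\mu_1}(z,z)}$ and $\sqrt{K^{\mu_2}(z,z)}$ differ by a factor bounded above and below by positive constants independent of $z$, the quantity $f(z)/\sqrt{K^{\mu_1}(z,z)}$ tends to $0$ if and only if $f(z)/\sqrt{K^{\mu_2}(z,z)}$ does. This gives the desired equivalence. I do not foresee a real obstacle: the only subtle point is the extremal property of the Bergman kernel for an admissible weight, but this is a standard and well-documented consequence of the reproducing property in the admissible-weight setting of \cite{Pas90}.
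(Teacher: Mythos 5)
Your proof is correct and follows essentially the same route as the paper: equality of the underlying function spaces with norm comparability, the extremal characterization to compare the on-diagonal kernels, and the reproducing property to reduce weak convergence to $f(z)/\sqrt{K^{\mu_j}(z,z)}\to 0$. You merely spell out the last implication a bit more explicitly than the paper does; there is no substantive difference.
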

\begin{proof}
	It is enough to show one direction. So we will showed that if $k^{\mu_1}_z\to 0$ 
	weakly as $z\to z_0$ then  $k^{\mu_2}_z\to 0$ weakly as $z\to z_0$. 
	Since $\mu_1$ and $\mu_2$ are equivalent measures we have 
	$A^2(\D, d\mu_1)=A^2(\D, d\mu_2)$ and there exists $C>1$ such that 
	\[\frac{\|f\|_{\mu_1}}{C}\leq \|f\|_{\mu_2}\leq C\|f\|_{\mu_1}\] 
	for all $f\in A^2(\D, d\mu_1)$. We remind the reader that for $z\in \D$ we have 
	\[K_{\mu_j}(z,z)=\sup \{|f(z)|^2:\|f\|_{\mu_j}\leq 1\}\]
	where $K_{\mu_j}$ is the Bergman kernel corresponding to $\mu_j$. Then 
	$K_{\mu_1}$ and $K_{\mu_2}$ are equivalent on the diagonal in the 
	 sense that there exists $D=C^2>1$ such that 
	\[\frac{K_{\mu_1}(z,z)}{D} \leq K_{\mu_2}(z,z)\leq DK_{\mu_1}(z,z). \]
	Now we assume that $k^{\mu_1}_z\to 0$ weakly as $z\to z_0$. 
	Let us fix $f\in A^2(\D, d\mu_1)$. Then  we have 
	\[\frac{f(z)}{\sqrt{K_{\mu_1}(z,z)}} 
	=\langle f,k^{\mu_1}_z\rangle_{\mu_1} \to 0 \text{ as } z\to z_0.\]
	Then 
	\[\langle f,k^{\mu_2}_z\rangle_{\mu_2} 
	=\frac{f(z)}{\sqrt{K_{\mu_2}(z,z)}}\to 0 \text{ as } z\to z_0.\]
	Therefore, we showed that if $k^{\mu_1}_z\to 0$ weakly as $z\to z_0$ 
	then  $k^{\mu_2}_z\to 0$ weakly as $z\to z_0$. 
\end{proof}

Let $\D$ be a pseudoconvex domain in $\C^n$ and $z_0\in b\D$. Then we call 
$z_0$ a \textit{bumping point} if for any $\delta>0$ there exists a 
pseudoconvex domain $\D_1$ such that $\{z_0\}\cup \D\subset \D_1\subset \D\cup 
B(z_0,\delta)$. 

\begin{lemma}\label{LemWeakConv}
	Let $r$ be a nonnegative real number, $\D$ be a bounded pseudoconvex  
	domain in $\C^n$ with Lipschitz boundary, and $z_0\in b\D$ be a bumping 
	point.  Then $k^r_z\to 0$ weakly as $z\to z_0$. 
\end{lemma}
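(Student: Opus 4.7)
The reproducing formula gives $\langle f, k_z^r\rangle_r = f(z)/\sqrt{K^r_\D(z,z)}$ for $f \in A^2(\D,(-\rho)^r)$, while $\|k_z^r\|_r = 1$; hence the weak convergence $k_z^r \to 0$ as $z \to z_0$ is equivalent to $f(z)/\sqrt{K^r_\D(z,z)} \to 0$ for every $f$ in a norm-dense subset of $A^2(\D,(-\rho)^r)$. I would prove this via two complementary ingredients: (A) the kernel blow-up $K^r_\D(z,z) \to \infty$ as $z \to z_0$, and (B) density in $A^2(\D,(-\rho)^r)$ of holomorphic functions that stay bounded in a neighborhood of $z_0$. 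For $f$ in such a dense class, $|f(z)|$ is bounded as $z\to z_0$ while the denominator tends to infinity, so $\langle f,k_z^r\rangle_r\to 0$; the dense approximation then promotes this to all of $A^2(\D,(-\rho)^r)$.

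\textbf{Ingredient (A): bumping plus H\"ormander.} The bumping hypothesis supplies, for a fixed small $\delta>0$, a pseudoconvex $\D_1$ with $\{z_0\}\cup \D\subset \D_1\subset \D\cup B(z_0,\delta)$, so that $z_0$ is an interior point of $\D_1$. Using Lemma \ref{LemEquiv} I would replace $(-\rho)^r$ by a comparable admissible weight of the form $e^{-\phi}$, where $\phi$ is plurisubharmonic on $\D_1$ and $e^{-\phi}\asymp (-\rho)^r$ on $\D$. For each $z_k\to z_0$ in $\D$, apply H\"ormander's $L^2$-existence theorem on the pseudoconvex $\D_1$ with the weight $\phi_k(z)=\phi(z)+2n\log|z-z_k|+\psi(z)$ (for a suitable bounded psh $\psi$) to solve $\dbar u_k=\dbar\chi_k$ with $\chi_k$ a cutoff equal to $1$ near $z_k$; the singularity in $\phi_k$ forces $u_k(z_k)=0$, so $f_k:=\chi_k-u_k\in\mathcal{O}(\D_1)$ satisfies $f_k(z_k)=1$ and
\[\int_{\D_1} |f_k|^2\, e^{-\phi}\,dV\lesssim e^{-\phi(z_k)}\asymp (-\rho(z_k))^{r}.\]
Consequently $\|f_k\|_{A^2(\D,(-\rho)^r)}^2\to 0$, and the extremal characterization $K^r_\D(z,z)=\sup\{|g(z)|^2:\|g\|_r\leq 1\}$ gives $K^r_\D(z_k,z_k)\to\infty$.

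\textbf{Ingredient (B): Diederich--Forn\ae{}ss neighborhoods.} By the Diederich--Forn\ae{}ss theorem invoked in the proof of Corollary \ref{Cor2}, $\D$ admits a defining function comparable to $\rho$ whose negative power is strictly plurisubharmonic, so the sublevel sets $\D^{(n)}=\{\rho<1/n\}$ form a pseudoconvex neighborhood basis of $\overline\D$. Any $g\in \mathcal{O}(\D^{(n)})|_{\D}$ is bounded on $\overline\D$, in particular near $z_0$. Given $f\in A^2(\D,(-\rho)^r)$ and $\ep>0$, I would produce $g\in \mathcal{O}(\D^{(n)})$ approximating $f$ by solving a $\dbar$-problem on $\D^{(n)}$ via H\"ormander with a plurisubharmonic weight extending $(-\rho)^r$ across $b\D$; Lemma \ref{LemEquiv} again provides the needed flexibility in choosing the weight.

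\textbf{Main obstacle.} The density in (B) is the hardest step. On strongly pseudoconvex $C^2$ boundaries the density of $\mathcal{O}(\overline\D)$ in $A^2(\D,(-\rho)^r)$ is classical, but on a merely Lipschitz pseudoconvex $\D$ it requires careful construction of a plurisubharmonic function on each $\D^{(n)}$ that is comparable to $-r\log(-\rho)$ on $\D$ and remains controlled on the collar $\D^{(n)}\setminus\overline\D$, and the H\"ormander constants must be tracked uniformly as $n\to\infty$. Analogously, in (A) one must glue the weight $\phi$ across $b\D\cap B(z_0,\delta)$ where $(-\rho)$ changes sign, using the bumping to extend psh-ly into the new piece $\D_1\setminus\D$. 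The pseudoconvexity plus Lipschitz regularity of $\D$, combined with Lemma \ref{LemEquiv}, should make both constructions feasible, but verifying that the resulting estimates are sharp enough is where the real work lies.
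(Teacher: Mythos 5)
Your ingredient (B) rests on a false premise, and this is a genuine gap. You claim that the Diederich--Forn\ae{}ss defining function makes the exterior sublevel sets $\{\rho < 1/n\}$ a pseudoconvex neighborhood basis of $\overline\D$. It does not: Diederich--Forn\ae{}ss gives $-(-\rho_1)^\eta$ strictly plurisubharmonic \emph{inside} $\D$, i.e.\ a bounded plurisubharmonic exhaustion, and says nothing about the outside. General bounded pseudoconvex domains (the worm domain is the standard example) have \emph{no} Stein neighborhood basis, so the approximation of $A^2(\D,(-\rho)^r)$ by functions holomorphic on a full neighborhood of $\overline\D$ is simply unavailable at this level of generality. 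Your plan to approximate $f$ globally by elements of $\mathcal{O}(\D^{(n)})$ therefore cannot be carried out, and since this is the step that was supposed to make the numerator $|f(z)|$ controllable near $z_0$, the proof does not close.

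The paper avoids this by making the modification of $f$ entirely local. Take a small translation $f_\ep(z)=f(z-\ep\nu)$ in the outward normal direction, glue $g_\ep=(1-\chi)f+\chi f_\ep$ with a cutoff $\chi$ supported near $z_0$, and note that $\dbar g_\ep=(f_\ep-f)\dbar\chi$ is supported on an annulus away from the little ball where $\D$ and the bumped domain $\D_1$ differ. Hence $\dbar g_\ep$ extends by zero to a $\dbar$-closed form on the pseudoconvex $\D_1$ supplied by the bumping hypothesis, and H\"ormander's theorem on $\D_1$ with weight $-r\log(-\rho_1)$ yields a correction $h_\ep$ of small norm with $\widetilde f_n=g_{1/n}-h_{1/n}\in A^2(\D,(-\rho)^r)$ converging to $f$ and smooth up to $b\D$ near $z_0$. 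No global approximation or Stein neighborhood is needed; the bumping is used precisely (and only) to have somewhere pseudoconvex to solve the local correction. Your ingredient (A) is also an unnecessary detour: once $\rho$ is bounded, $K_\D^r(z,z)\geq c\,K_\D(z,z)$ by the extremal characterization, so the weighted kernel blow-up at $z_0$ follows from the classical unweighted blow-up without any H\"ormander peak construction; the paper simply cites this as known.
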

\begin{proof}
	By Lemma \ref{LemEquiv}, without loss of generality, we assume that 
	$\rho$ denotes  the negative distance to the boundary of $\D$. 
	 
	Let us fix $f\in A^2(\D,(-\rho)^r)$ and choose $r_1,r_2>0$  so that 
	$0<r_1<r_2$ and the outward unit vector $\nu$ is transversal to 
	$B(z_0,2r_2)\cap b\D$. Since $z_0$ is a bumping point we choose a bounded 
	pseudoconvex domain $\D_1$ such that 
	\[\{z_0\}\cup \D\subset  \D_1\subset \D\cup B(z_0,r_1).\] 
	So even though $\D_1$ contains a small neighborhood of $z_0$, we have 
	$\D\setminus B(z_0,r_1)= \D_1\setminus B(z_0,r_1)$. 
	
	Let us choose $\chi\in C^{\infty}_0(B(z_0,r_2))$ such that $\chi\equiv 1$ on 
	a neighborhood of  $\overline{B(z_0,r_1)}$. For $\ep>0$ small we define  
	$f_{\ep}(z)=f(z-\ep\nu)$  and $g_{\ep}=(1-\chi)f+\chi f_{\ep}$. Then 
	\begin{itemize}
		\item [i.]  $f_{\ep}\in A^2(\D\cap B(z_0,r_2),(-\rho)^r)$ and $f_{\ep}\to f$ 
		in $L^2(\D\cap B(z_0,r_2),(-\rho)^r)$, 
		\item[ii.] 	$g_{\ep}|_{\D\cap B(z_0,r_2)}$ is $C^{\infty}$-smooth 
		and $g_{\ep}\to f$ in $L^2(\D,(-\rho)^r)$ as $\ep\to 0$. 
	\end{itemize}
	
Let	$\rho_1$ and $Supp (\dbar\chi)$ denote  the negative distance to the 
boundary of $\D_1$ and the support of $\dbar \chi$, respectively.  Then 
$Supp(\dbar\chi) \cap \D=Supp(\dbar\chi) \cap \D_1$ and $-\rho$ and $-\rho_1$ 
are equivalent  on the support of $\dbar \chi$. 
	Furthermore, $\dbar g_{\ep}$ is a $\dbar$-closed $(0,1)$-form on $\D_1$ 
	($\dbar g_{\ep}$ is well defined on $\D_1$ as $\dbar\chi =0$ on $B(z_0,r_1)$) 
	for all small $\ep>0$ and there exists $C>0$ such that 
	\[\|\dbar g_{\ep}\|_{L^2(\D_1,(-\rho_1)^r)}
	\leq C \|f-f_{\ep}\|_{L^2(\D\cap B(z_0,r_2),(-\rho)^r)}
	\|\dbar \chi\|_{L^{\infty}(B(z_0,r_2))}\to 0 \text{ as }\ep\to 0.\] 
	
Next we will use H\"{o}rmander's theorem \cite{Hormander65} with 
the plurisubharmonic exponential weight $-r\log(-\rho_1)$. We note 
that  $-\log(-\rho_1)$ is plurisubharmonic because $\D_1$ is pseudoconvex. 
Then using H\"{o}rmander's theorem we get a constant $c_{\D_1}>0$ 
(depending on $\D_1$) and $h_{\ep}\in L^2(\D_1)$ 
	such that $\dbar h_{\ep}=\dbar g_{\ep}$ and 
$\|h_{\ep}\|_{L^2(\D_1,(-\rho_1)^r)}
\leq c_{\D_1}\|\dbar g_{\ep}\|_{L^2(\D_1,(-\rho_1)^r)}$. 
 Furthermore, since $\dbar$ is elliptic on the interior and $\dbar g_{\ep}$ is 
  $C^{\infty}$-smooth on $\D_1$,  we have $h_{\ep}\in C^{\infty}(\D_1)$. 
	
	We define $\widetilde{f}_n=g_{1/n}-h_{1/n}$. Then we have 
	\begin{itemize}
		\item[i.] $\widetilde{f}_n\in A^2(\D,(-\rho)^r)$ and $\widetilde{f}_n\to f$ in 
		$A^2(\D,(-\rho)^r)$,
		\item[ii.] $\widetilde{f}_n|_{\D\cap B(z_0,r_1)}\in C^{\infty}(\overline{\D\cap 
		B(z_0,r_1)})$.
	\end{itemize}
	So $\{\widetilde{f}_n\}$ is a sequence converging to $f$ and each member of 
	the sequence is smooth up to the boundary of $\D$ on a neighborhood of $z_0$.
	
	Finally, we will show weak convergence of $k^r_z$ to 0 as $z\to z_0$. 
\begin{align*}
\left|\langle f,k^r_z\rangle_{A^2(\D,(-\rho)^r)}\right| 
	\leq& \left|\langle f-\widetilde{f}_n,k^r_z\rangle_{A^2(\D,(-\rho)^r)}\right| 
	+\left|\langle \widetilde{f}_n,k^r_z\rangle_{A^2(\D,(-\rho)^r)}\right| \\
	\leq& \|f-\widetilde{f}_n\|_{L^2(\D,(-\rho)^r)}+\frac{|\widetilde{f}_n(z)|}{\sqrt{K_r(z,z)}}.
\end{align*}
	The first term on the right hand side can be made arbitrarily small for large 
	enough $n$, because $\|f-\widetilde{f}_n\|_{L^2(\D,(-\rho)^r)}\to 0$ as $n\to \infty$. 
	So for $\delta>0$ given we choose $n_{\delta}$ so that 
	$\|f-\widetilde{f}_{n_{\delta}}\|_{L^2(\D,(-\rho)^r)}\leq \delta$. Then since 
	$\widetilde{f}_{n_{\delta}}$ is $C^{\infty}$-smooth on $\overline{\D\cap B(z_0,r_1)}$ 
	(and $K_r(z,z)\to \infty$ as $z\to z_0$) we conclude that  
	$|\widetilde{f}_{n_{\delta}}(z)|/\sqrt{K_r(z,z)}\to 0$ as  $z\to z_0$.  Hence, 
	$\limsup_{z\to z_0}|\langle f,k^r_z\rangle|\leq \delta$ for arbitrary $\delta>0$. 
	Therefore, $k^r_z\to 0$ weakly as $z\to z_0$. 
\end{proof}

Now we are ready to prove Theorem \ref{Thm1}.
\begin{proof}[Proof of Theorem \ref{Thm1}]
In case  $r=0$, the proof of the theorem simplifies greatly as inflation and 
the related techniques are unnecessary. So we will prove the more difficult 
case, $r>0$.  

First we assume that $T$ is compact about strongly pseudoconvex points. 
Let $\D_r^p$ be defined as in \eqref{EqnDomain} and $z_0\in b\D$ be a 
strongly pseudoconvex point. Since small $C^2$-perturbations of strongly 
pseudoconvex points stay pseudoconvex, $z_0$ is a bumping point 
for $\D$. Then Lemma \ref{LemWeakConv}  implies that $k^r_z\to 0$  
weakly as $z\to z_0$. Furthermore, there exists an  open neighborhood $U$  
of $z_0$ such that weakly pseudoconvex points are contained in 
$b\D\setminus \overline{U}$; and, as in the proof of 
\eqref{smallintegral},  one can show that 
\[\|k^r_z\|_{L^2(\D\setminus \overline{U},(-\rho)^r)}\to 0 \text{ as } z\to z_0.\]
Therefore, $\{k^r_z\}$ converges to $0$ weakly about 
strongly pseudoconvex points as $z\to z_0$. 
Moreover, since $T$ is compact about strongly pseudoconvex points 
(such operators map sequences of holomorphic functions weakly 
convergent about strongly pseudoconvex points  to convergent 
sequences) we conclude that 
\[B_rT(z)=\left\langle Tk^r_z, k^r_z\right\rangle_{A^2(\D,(-\rho)^r)}\to 0\] 
as $z\to z_0$. 
	
Next we prove the other direction. As a first step we assume that $T$ is a 
finite sum of finite products of Toeplitz operators on $A^2(\D,(-\rho)^r)$
with symbols continuous on $\Dc$. Furthermore, we assume that  
\[\lim_{z\to z_0}B_rT(z)= 0\] 
for any strongly pseudoconvex point $z_0\in b\D$.  

Lemma \ref{LemConvert} implies that  
\begin{align}\label{Eqn1}
T=T_{\phi}^r+S^r
\end{align}
where $\phi\in C(\Dc)$ and $S^r$  is a sum of operators that start 
with a Hankel operator with symbol continuous on $\Dc$. 

Lemma \ref{LemArazyEnglis} implies that  
\begin{align}\label{Eqn2}
\lim_{z\to z_0} B_rT^r_{\phi}(z)= \phi(z_0)
\end{align} 
as strongly pseudoconvex points have holomorphic peak functions 
(see, \cite[Theorem 1.13 in Ch VI]{RangeBook}). 

By Proposition \ref{PropComp}, the operator $H^r_{\psi}$ is compact 
about strongly pseudoconvex points for any $\psi\in C(\Dc)$.  
Then $H^r_{\psi}k^r_z\to 0$ as $z\to z_0$ for any $\psi\in C(\Dc)$ because,  
as proven in the first part of this proof, $k^r_z\to 0$ weakly about strongly 
pseudoconvex points as $z\to z_0$. Hence, $B_rS^r(z)\to 0$ as $z\to z_0$. 
Combining this with \eqref{Eqn1} and \eqref{Eqn2} we can conclude that 
\[\phi(z_0)=\lim_{z\to z_0}B_rT(z)= 0.\] 
Since $z_0$ was an arbitrary strongly pseudoconvex point, we have  $\phi=0$ 
on all the strongly pseudoconvex boundary points. Then Lemma \ref{LemComp} 
and the fact that $S^r$ is compact about strongly pseudoconvex points imply 
that $T$ is compact about strongly pseudoconvex points.

Finally, we assume that $T\in \mathscr{T}(\Dc,(-\rho)^r)$. Then, using 
Lemma \ref{LemConvert}, for every $\ep>0$ there exists 
$\phi_{\ep}\in C(\Dc)$ and an operator $S^r_{\ep}$,  compact 
about strongly pseudoconvex points, such that  
\[\|T+T_{\phi_{\ep}}^r+S^r_{\ep}\|\leq \ep.\] 
Then for $z\in \D$ we have 
\begin{align*}
\left|B_rT(z)+B_rT^r_{\phi_\ep}(z)+B_rS^r_{\ep}(z)\right|
&=\left|\langle Tk_z^r+T^r_{\phi_\ep}k_z^r+S^r_{\ep}k_z^r,k_z^r 
\rangle_r\right|\\
&\leq \|T+T_{\phi_{\ep}}^r+S^r_{\ep}\|\\
&\leq \ep.
\end{align*}
Since $B_rS^r_{\ep}(z)\to 0$ and  $B_rT^r_{\phi_\ep}(z)\to \phi_\ep(z_0)$
(and we assume that $B_rT(z)\to 0$  as $z\to z_0$)  as $z\to z_0$ we have 
$\left|\phi_\ep(z_0)\right| \leq \ep$. That is, $|\phi_{\ep}|\leq \ep$ on  
strongly pseudoconvex points of $\D$. We choose $\psi_{\ep}\in C(\Dc)$ such 
that $\psi_{\ep}=0$ on strongly pseudoconvex boundary points of $\D$ and 
\[\sup\{|\psi_{\ep}(z)-\phi_{\ep}(z)|:z\in \Dc\}\leq 2\ep.\]
Then Lemma \ref{LemComp} implies that $T^r_{\psi_{\ep}}$ is compact 
about strongly  pseudoconvex points and   
\[\| T^r_{\phi_\ep}-T^r_{\psi_{\ep}}\|\leq 2\ep.\]
Hence 
 \[\|T+T^r_{\psi_{\ep}}+S_{\ep}^r\|\leq \| T+T_{\phi_{\ep}}^r+S^r_{\ep}\|
 +\| T^r_{\psi_{\ep}}-T^r_{\phi_\ep}\|\leq 3\ep.\]
 Therefore, $T$ is in the norm closure of the compact about strongly 
pseudoconvex points operators. Finally, Proposition \ref{PropQuasiCompClosed} 
implies that $T$ is compact about strongly pseudoconvex points. 
\end{proof}

\section{Acknowledgment}
Part of this work was done while the second author was visiting Sabanc\i{}  
University. He thanks this institution for its hospitality and good working 
environment. We would like to thank the anonymous referee for pointing 
out and helping to fix some inaccuracies.
\singlespace

\end{document}